\newcommand{\R}{\mathbb{R}}
\newcommand{\N}{\mathbb{N}}
\newcommand{\Z}{\mathbb{Z}}
\newcommand{\dd}{\mathrm{d}}
\newcommand{\E}{\mathbf{E}}
\newcommand{\p}{\mathbf{P}}
\theoremstyle{plain}
\newtheorem{lemma}{Lemma}
\newtheorem{theorem}{Theorem}
\newtheorem*{theorem*}{Theorem}
\newtheorem{proposition}{Proposition}
\newtheorem{corollary}{Corollary}
\theoremstyle{remark}
\newtheorem{remark}{Remark}
\newtheorem{example}{Example}
\title{Regularly log-periodic functions and some applications}
\author{P\'eter Kevei \\
MTA-SZTE Analysis and Stochastics Research Group \\
Bolyai Institute, Aradi v\'ertan\'uk tere 1, 6720 Szeged, Hungary \\
\texttt{kevei@math.u-szeged.hu}}
\begin{document}

\maketitle

\begin{abstract}
We prove a Tauberian theorem for the Laplace--Stieltjes transform and 
Karamata-type theorems in the framework of regularly 
log-periodic functions. As an 
application we determine the exact tail behavior of fixed points of certain 
type smoothing transforms.

\noindent \textit{Keywords:} Regularly log-periodic functions; Tauberian theorem; 
Karamata theorem; smoothing transform; semistable laws;  
supercritical branching processes. \\
\noindent \textit{MSC2010:} 44A10, 60F99.
\end{abstract}

\section{Introduction}

A function $f: [0,\infty) \to [0,\infty)$ is \emph{regularly 
log-periodic}, $f \in \mathcal{RL}$ or $f \in \mathcal{RL}(p,r,\rho)$, if it is 
measurable, there is a slowly varying 
function at infinity $\ell$, real numbers $\rho \in \R$, $r > 1$, and a positive 
logarithmically periodic function $p \in \mathcal{P}_{r}$, such that
\begin{equation} \label{eq:rlp-def}
\lim_{n \to \infty} \frac{f(x r^n)}{(x r^n)^\rho \ell(x r^n)} = p(x), \quad x \in C_p,
\end{equation}
where $C_p$ stands for the set of continuity points of $p$, and for $r > 1$
\[
\begin{split}
\mathcal{P}_{r} = \Big\{  p: (0,\infty) \to (0,\infty)  : & \,
\inf_{x \in [1,r]} p(x) > 0, \ 
p \text{ is bounded, right-continuous, } \\ 
& \text{ and }  p(x r) = p(x), \ 
\forall x >0\Big\}.
\end{split}
\]

This function class is a natural and important extension of regularly varying functions, 
and it appears in different areas of theoretical and applied probability.
This class arises in connection with various random fixed point 
equations, such as the smoothing transformation. Regularly log-periodic functions are the 
basic ingredients in the theory of semistable laws. The tail of the limiting random 
variable of a supercritical Galton--Watson process is also regularly log-periodic. These 
are spelled out in details in Section \ref{sect:appl}. Here we only mention 
some results for the perpetuity equation 
\begin{equation} \label{eq:perp}
X \stackrel{\mathcal{D}}{=} AX + B,
\end{equation}
where $(A,B)$ and $X$ on the right-hand side are independent. Under 
appropriate assumptions, Grincevi\v{c}ius \cite[Theorem 2]{Grinc} showed that the tail of 
the solution of (\ref{eq:perp}) is regularly log-periodic with constant slowly varying 
function. Under similar assumptions the same asymptotic behavior was shown for the 
max-equation
$X \stackrel{\mathcal{D}}{=} \max\{ AX , B\}$, which corresponds to the maximum of 
perturbed random walks; see Iksanov \cite[Theorem 1.3.8]{Iksanov}. More 
generally, this type of tail behavior appears in implicit renewal theory in the 
arithmetic case; see Jelenkovi\'c and Olvera-Cravioto \cite[Theorem 3.7]{JOC3}, 
and Kevei \cite{Kevei2}.
In general, functions of the form $p(x) e^{\lambda x}$, $\lambda \in 
\R$, where $p$ is a periodic function, are solutions of certain integrated Cauchy 
functional equations, see Lau and Rao \cite{LauRao}.

The name `regularly log-periodic' comes from Buldygin and Pavlenkov \cite{BP, BP2}, where 
a function $f$ is called regularly log-periodic, if
\begin{equation} \label{eq:f-def}
f(x) = x^\rho \ell(x) p(x), \quad x > 0,
\end{equation}
where $\ell, \rho$ and $r$ are the same as above, and $p \in \mathcal{P}_r$ is 
\emph{continuous}. This condition is clearly much stronger than (\ref{eq:rlp-def}) even 
without the continuity of $p$. In the examples 
given above, the continuity assumption does not necessarily hold, and this is the reason 
for the extension of the definition. Moreover, our main motivation originates in the 
studies of the St.~Petersburg distribution, where the corresponding $p$ 
function is not continuous; see Example \ref{ex:StP} at the end of Subsection 
\ref{subsect:tails}.

\smallskip

In what follows, we assume that $U:[0,\infty) \to [0,\infty)$ is a nondecreasing 
function, and
\[
\widehat U(s) = \int_0^\infty e^{-sx} \dd U(x)
\]
denotes its Laplace--Stieltjes transform.
Since we need monotonicity, for $r > 1$ we further introduce the sets of functions 
\begin{equation} \label{eq:def-P}
\begin{split}
&\mathcal{P}_{r,\rho} = \Big\{  p: (0,\infty) \to (0,\infty) \, : \,
p \in \mathcal{P}_{r}, \text{ and } x^{\rho} p(x) \text{ is nondecreasing} \Big\}, 
\ \rho \geq 0, \\
&\mathcal{P}_{r,\rho} = \Big\{  p: (0,\infty) \to (0,\infty) \, : \,
p \in \mathcal{P}_{r}, \text{ and } x^{\rho} p(x) \text{ is nonincreasing} \Big\}, 
\ \rho < 0.
\end{split}
\end{equation}
In order to characterize the Laplace--Stieltjes transform of regularly log-periodic 
functions, for $r > 1$, $\rho \geq 0$, put
\begin{equation} \label{eq:def-Q}
\mathcal{Q}_{r,\rho} = \Big\{  q: (0, \infty) \to (0,\infty) \, :
\, s^{-\rho} q(s) \text{ is completely monotone, and } q(s r) = q(s), \ \forall s>0  
\Big\}.
\end{equation}
For $\rho = 0$ the sets 
$\mathcal{P}_{r, 0}, \mathcal{Q}_{r, 0}$ are just the set of constant functions.

\smallskip

The aim of the present paper is to prove Tauberian theorem for the Laplace--Stieltjes 
transform, and Karamata-type theorems in the framework of regularly log-periodic 
functions. The ratio Tauberian theorem \cite[Theorem 2.10.1]{BGT}, a general version of 
the Tauberian theorem for Laplace-Stieltjes transforms, holds for O-regular varying 
functions. The equivalence of the behavior of $U$ at infinity and $\widehat U$ at zero 
holds, if and only if $U^*(\lambda) = \limsup_{x \to \infty} U(\lambda x)/ U(x)$ is 
continuous at 1. The latter condition for functions defined in (\ref{eq:f-def}) is 
equivalent to the continuity of $p$; see Proposition \ref{prop:rlp}. 
In particular, the discontinuity of $p$ is the reason that the ratio Tauberian theorem 
\cite[Theorem 2.10.1]{BGT} does not hold in this setup. However, in Theorem \ref{thm:taub} 
below we do 
provide an equivalence between the tail behavior of the function, and the behavior of 
its Laplace--Stieltjes transform at zero. 
In  \cite{BP, BP2}, Buldygin and Pavlenkov proved 
Karamata theorems in the sense of Theorems 1.5.11 
(direct half) and 1.6.1 (converse half) of Bingham, Goldie and Teugels \cite{BGT}, for 
functions satisfying (\ref{eq:f-def}) with continuous $p$.
Here we extend these results.

\smallskip

Section \ref{sect:results} contains the main results of 
the paper. After some preliminaries, first we deal with a Tauberian theorem for the 
Laplace--Stieltjes transform, 
then we prove the direct half of the Karamata theorem, and a monotone density theorem. 
In Section \ref{sect:appl} we give some applications. We prove that the tail of 
a 
nonnegative random variable is regularly log-periodic, if and only if the same is true 
for its Laplace transform at 0. Using this result we determine the tail behavior of 
fixed points of certain smoothing transforms. We reprove, in a special case, a 
result by Watanabe and Yamamuro \cite{WY3} for tails of semistable random variables. 
Finally, we spell out some related results on the limit of supercritical branching 
processes.

\section{Results} \label{sect:results}

\subsection{Preliminaries}

First we discuss the place of the regularly log-periodic functions among well-known 
function classes, such as regularly varying functions, extended and O-regularly varying 
functions.

In the following we always assume that $f: [0,\infty) \to [0, \infty)$ is nonnegative and 
measurable. For $\lambda > 0$ let
\[
f^*(\lambda) = \limsup_{x \to \infty} \frac{f(\lambda x)}{f(x)}, \quad
f_*(\lambda) = \liminf_{x \to \infty} \frac{f(\lambda x)}{f(x)}.
\]
A function $f$ is \emph{extended regularly varying} if for some constants $c, d$
\begin{equation} \label{eq:ER-def}
\lambda^d \leq f_*(\lambda) \leq f^*(\lambda) \leq \lambda^c, \ \lambda > 1,
\end{equation}
and it is \emph{O-regularly varying} if
\[
 0 < f_*(\lambda) \leq f^*(\lambda) < \infty.
\]

First we note that general regularly log-periodic functions can be quite irregular.

\begin{example}
Consider the function
\begin{equation} \label{eq:counter}
f(x) =
\begin{cases}
n, & \text{if } x \in [ (1 + n^{-1}) 2^n,  (1 + 2n^{-1}) 2^n], \\
1, & \text{otherwise.}
\end{cases}
\end{equation}
Then (\ref{eq:rlp-def}) holds with $\ell(x) \equiv 1$, $\rho=0$, $r=2$, and $p(x) \equiv 
1$. Indeed, $\lim_{n \to \infty} f(2^n x) = 1$ for every $x > 0$, but $f$ is not even 
bounded, and the exceptional intervals are large.
\end{example}

For monotone log-periodic functions the situation is not so bad. A function $f:[0,\infty) 
\to [0, \infty)$ is ultimately monotone if it is monotone (increasing or decreasing) for 
large enough $x$.

\begin{proposition} \label{prop:rlp0}
Let $f \in \mathcal{RL}(p, r, \rho)$ be an ultimately monotone regularly log-periodic 
function. Then
\[
\limsup_{x \to \infty} \frac{f(x)}{x^\rho \ell(x)} < \infty,
\]
and $f$ is O-regularly varying.
\end{proposition}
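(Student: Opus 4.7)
The plan is to use ultimate monotonicity to sandwich $f(x)$, for large $x$, between two values of the form $f(y_i r^n)$ where $y_1 < y_2$ are continuity points of $p$, and then invoke (\ref{eq:rlp-def}) at $y_1$ and $y_2$. The two key inputs are that $p$ is uniformly bounded above and below by positive constants on all of $(0,\infty)$ (which follows from $\inf_{[1,r]} p > 0$ together with log-periodicity) and the uniform convergence of slowly varying functions on compact intervals. Since right-continuity forces the discontinuity set of $p$ to be at most countable, $C_p$ is dense in $(0,\infty)$, and so we may pick $y_1 \in C_p \cap (0,1)$ and $y_2 \in C_p \cap (r, \infty)$.

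Without loss of generality assume $f$ is eventually nondecreasing (the nonincreasing case is symmetric). For $x$ large, let $n = n(x)$ satisfy $r^n \leq x < r^{n+1}$; then eventually $y_1 r^n < x < y_2 r^n$, and monotonicity yields
\[
f(y_1 r^n) \leq f(x) \leq f(y_2 r^n).
\]
Applying (\ref{eq:rlp-def}) at $y_1$ and $y_2$, dividing by $x^\rho \ell(x)$, and using that the ratios $y_i r^n / x$ stay in a fixed compact subset of $(0,\infty)$ together with $\ell(y_i r^n)/\ell(x) \to 1$ (uniform convergence of slowly varying functions on compacta), we arrive at
\[
0 < c_1 \leq \liminf_{x \to \infty} \frac{f(x)}{x^\rho \ell(x)} \leq \limsup_{x \to \infty} \frac{f(x)}{x^\rho \ell(x)} \leq c_2 < \infty
\]
for explicit positive constants $c_1, c_2$; the positivity of $c_1$ is where $\inf p > 0$ enters decisively.

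O-regular variation is then an immediate corollary: writing
\[
\frac{f(\lambda x)}{f(x)} = \lambda^\rho \cdot \frac{\ell(\lambda x)}{\ell(x)} \cdot \frac{f(\lambda x)/\bigl((\lambda x)^\rho \ell(\lambda x)\bigr)}{f(x)/\bigl(x^\rho \ell(x)\bigr)},
\]
the middle factor tends to $1$ by slow variation of $\ell$, while the last factor lies between $c_1/c_2$ and $c_2/c_1$, yielding $0 < f_*(\lambda) \leq f^*(\lambda) < \infty$ for every $\lambda > 0$. The only step requiring genuine care is the sandwich itself: one must take $y_1$ strictly less than $1$ and $y_2$ strictly greater than $r$ (rather than $y_1 = 1$, $y_2 = r$) so that $y_1 r^n \leq x \leq y_2 r^n$ holds for \emph{all} $x \in [r^n, r^{n+1})$ once $n$ is large, and the density of $C_p$ is precisely what makes this choice possible.
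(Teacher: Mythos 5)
Your proof is correct and uses the same core idea as the paper: ultimate monotonicity lets you compare $f(x)$ with $f$ evaluated at nearby points of the form $\eta r^n$ with $\eta \in C_p$, after which the defining limit relation plus the uniform convergence theorem for slowly varying functions finishes the job. The only real difference is presentation — you run a direct two-sided sandwich with fixed $y_1 \in C_p \cap (0,1)$ and $y_2 \in C_p \cap (r,\infty)$, which also makes the positive lower bound (and hence O-regular variation) fully explicit, whereas the paper argues the upper bound by contradiction via Bolzano--Weierstrass and then asserts O-regular variation from boundedness and $\inf p > 0$.
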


\begin{proof}
Assume that $f$ is ultimately monotone increasing. The decreasing case follows the same 
way. Indirectly assume that $f(x_n)/(x_n^{\rho } \ell(x_n)) \to \infty$ for some 
$x_n \uparrow \infty$. Write $x_n = r^{k_n} z_n$, where $z_n \in [1,r)$. Using the 
Bolzano--Weierstrass theorem, we may assume that $z_n \to \lambda \in [1,r]$. With some 
$\lambda < \eta \in C_p$, for large enough $n$
\[
\frac{f( r^{k_n} z_n )}{(r^{k_n} z_n)^{\rho } \ell(r^{k_n} z_n)} \leq 
\frac{f( r^{k_n} \eta )}{(r^{k_n})^{\rho } \ell(r^{k_n} z_n) } \to \eta^{\rho} 
p(\eta), 
\]
which is a contradiction. The O-regular variation follows from the boundedness 
and strict positivity of $p$.
\end{proof}

For the extended regular variation, and for the continuity of $f^*$ stronger conditions 
are needed.

\begin{proposition} \label{prop:rlp}
Assume that for a slowly varying function $\ell$, for $\rho \in \R$, $r > 1$, and $p \in 
\mathcal{P}_r$ 
\[
f(x) = x^\rho \ell(x) p(x).
\]
Then $f$ is
\begin{itemize}
\item[(i)] extended regularly varying if and only if $p$ is Lipschitz on 
$[1,r]$;
\item[(ii)] regularly varying if and only if $p$ is constant.
\end{itemize}
Moreover, $f^*$ is continuous at 1, if and only if $p$ is continuous. 
\end{proposition}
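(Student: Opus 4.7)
My plan is to handle all three parts by first isolating the log-periodic factor. Since $\ell$ is slowly varying, $\ell(\lambda x)/\ell(x) \to 1$ for every $\lambda > 0$, so
\[
f^*(\lambda) = \lambda^{\rho}\, p^*(\lambda), \qquad f_*(\lambda) = \lambda^{\rho}\, p_*(\lambda),
\]
where $p^*, p_*$ are defined analogously for $p$. It is convenient to pass to $g(u) := p(r^u)$, a bounded, strictly positive, right-continuous, $1$-periodic function on $\R$. The key observation is that along $x_n = r^{u+n}$ with $\lambda = r^t$, log-periodicity makes $p(\lambda x_n)/p(x_n) = g(u+t)/g(u)$ constant in $n$; therefore
\[
p_*(\lambda) \ \leq\ g(u+t)/g(u) \ \leq\ p^*(\lambda), \qquad u \in \R,\ t \in \R.
\]

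Part (ii) is then quick: the ``if'' direction is immediate, and if $f$ is regularly varying of index $\alpha$, then $p(\lambda x)/p(x) \to \lambda^{\alpha-\rho}$; choosing $\lambda = r$ and using $p(rx) = p(x)$ forces $\alpha = \rho$, so taking $x = r^n$ yields $p(\lambda) = p(1)$ for every $\lambda > 0$.

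For (i), it is convenient to work with $\log g$, which is Lipschitz iff $g$ is (using $m \leq g \leq M$). In the ``if'' direction, a Lipschitz $p$ on $[1,r]$ with $p(1) = p(r)$ makes $g$ Lipschitz and $1$-periodic and hence $\log g$ Lipschitz; the sandwich above then gives $g(u+t)/g(u) \in [r^{-K|t|}, r^{K|t|}]$, which translates to $\lambda^{\rho - K} \leq f_*(\lambda) \leq f^*(\lambda) \leq \lambda^{\rho + K}$. For the converse, extended regular variation plugged into the sandwich yields
\[
(d-\rho)\, t \log r \ \leq\ \log g(u+t) - \log g(u) \ \leq\ (c-\rho)\, t \log r, \qquad u \in \R,\ t > 0.
\]
Setting $t = v-u > 0$ gives $|\log g(v) - \log g(u)| \leq \max(|c-\rho|, |d-\rho|)\log r \cdot (v-u)$, so $\log g$ is Lipschitz on $\R$; in particular it has no jumps, and $p|_{[1,r]}$ is Lipschitz.

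For (iii), if $p$ is continuous, then $g$ is continuous and $1$-periodic, hence uniformly continuous, and together with $g \geq m > 0$ and the sandwich this yields $p^*(\lambda) \to 1 = f^*(1)$ as $\lambda \to 1$. Conversely, if $p$ is discontinuous at some $x_0 = r^{u_0}$, right-continuity of $g$ forces $g(u_0-) \neq g(u_0)$; after possibly switching $\lambda \leftrightarrow 1/\lambda$, assume $g(u_0) > g(u_0-)$. For $\lambda = r^t$, $t > 0$, take $x_n = r^{u_0 - t/2 + n}$ and apply the sandwich:
\[
p^*(\lambda) \ \geq\ g(u_0 + t/2)/g(u_0 - t/2) \ \xrightarrow[t \to 0^+]{} \ g(u_0)/g(u_0-) \ > \ 1,
\]
so $\liminf_{\lambda \to 1^+} f^*(\lambda) \geq g(u_0)/g(u_0-) > 1 = f^*(1)$, contradicting continuity of $f^*$ at $1$. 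The main obstacle is the converse of (i): one must convert multiplicative control on period-shifted pairs into a genuine Lipschitz estimate on $g$, and rule out jumps along the way.
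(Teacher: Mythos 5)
Your reduction to $g(u)=p(r^u)$ on $\R$, with the observation that $p(\lambda x_n)/p(x_n)=g(u+t)/g(u)$ along $x_n=r^{u+n}$ (so that in fact $p^*(r^t)=\sup_u g(u+t)/g(u)$ and $p_*(r^t)=\inf_u g(u+t)/g(u)$), is the same structural fact the paper exploits via the identity $f^*(\lambda)=\lambda^\rho\sup_{x\in[1,r]}p(\lambda x)/p(x)$; so parts (ii), the ``if'' direction of (i), and the ``if'' direction of the continuity statement are in line with the paper. Where you genuinely diverge is the converse of (i): the paper argues by contradiction, extracting sequences $\lambda_n\downarrow 1$, $x_n\to x$ with $|p(\lambda_n x_n)-p(x_n)|\ge n\,x_n(\lambda_n-1)$ and showing that the extended regular variation bound then fails as $\lambda_n\to 1$. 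You instead feed the two-sided bound $\lambda^{d-\rho}\le g(u+t)/g(u)\le \lambda^{c-\rho}$ directly into logarithms and read off a global Lipschitz estimate for $\log g$ at once. That is cleaner and more quantitative than the paper's indirect argument, and it simultaneously delivers continuity of $g$, so the ``rule out jumps'' step you worry about at the end is already taken care of. This is a legitimate alternative.

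There is, however, one real gap, in the ``only if'' direction of the continuity-at-$1$ claim. You write that if $p$ is discontinuous at $x_0=r^{u_0}$ then ``right-continuity of $g$ forces $g(u_0-)\neq g(u_0)$.'' But $\mathcal{P}_r$ requires only right-continuity; it does not grant existence of left limits, so $g(u_0-)$ need not exist, and the limit $g(u_0-t/2)\to g(u_0-)$ you invoke as $t\to 0^+$ may fail. The conclusion is still true, but you need an extra step: since $g$ is bounded, pick $u_k\uparrow u_0$ with $g(u_k)\to c\neq g(u_0)$ along a subsequence. If $c<g(u_0)$, set $t_k=2(u_0-u_k)\downarrow 0$ and use $p^*(r^{t_k})\ge g(u_0+t_k/2)/g(u_0-t_k/2)=g(u_0+t_k/2)/g(u_k)\to g(u_0)/c>1$ (numerator by right-continuity); if $c>g(u_0)$, work with $\lambda=r^{-t_k}\to 1^-$ and $p^*(r^{-t_k})\ge g(u_k)/g(u_0)\to c/g(u_0)>1$. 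Either way $f^*$ fails to be continuous at $1$. With this repair the argument is complete.
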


Note that a logarithmically periodic function is globally Lipschitz if and 
only if it is constant.

\begin{proof}[Proof of Proposition \ref{prop:rlp}]
The logarithmic periodicity of $p$ implies
\[
f^*(\lambda) = \lambda^\rho \sup_{x \in [1,r]} \frac{p(\lambda x)}{p(x)}, 
\]
from which we see that $f^*$ is continuous at 1 if and only if $p$ is continuous.

We turn to (i). Let $\lambda > 1$. If $p$ is Lipschitz with Lipschitz constant $L$, 
then for $x \in [1,r]$ we have
$p(\lambda x) \leq p(x) + L x (\lambda -1)$, thus
\[
\sup_{x \in [1,r]} \frac{p(\lambda x)}{p(x)} \leq 
1 + L (\lambda -1) \sup_{x \in [1,r]} \frac{x}{p(x)} \leq \lambda^{c - \rho}
\]
for some $c > 0$. The proof of the lower bound is similar. For the converse, assume 
indirectly that $p$ is not Lipschitz. Then there are two sequences $\lambda_n \downarrow 
1$, and $x_n \to x \in [1,r]$ such that
\[
|p(\lambda_n x_n ) - p(x_n) | \geq n x_n (\lambda_n -1),
\]
consequently (\ref{eq:ER-def}) cannot hold. Finally, (ii) is obvious.
\end{proof}

\subsection{Tauberian theorem for the Laplace transform}

Recall (\ref{eq:def-P}) and (\ref{eq:def-Q}).
There is a natural correspondence between $\mathcal{P}_{r,\rho}$ and 
$\mathcal{Q}_{r,\rho}$.

\begin{lemma} \label{lemma:Laplace}
For $p \in \mathcal{P}_{r,\rho}$, $\rho > 0$, define the operator $\mathrm{A}_{r, \rho} = 
\mathrm{A}_\rho$ as
\begin{equation} \label{eq:def-tildeq}
\mathrm{A}_\rho p (s) = s^{\rho} \int_0^\infty e^{-s x} \dd ( p(x) x^\rho) .
\end{equation}
Then $\mathrm{A}_\rho : \mathcal{P}_{r,\rho} \to \mathcal{Q}_{r,\rho}$ is one-to-one.
\end{lemma}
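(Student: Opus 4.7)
The plan is to verify that $\mathrm{A}_\rho$ indeed maps $\mathcal{P}_{r,\rho}$ into $\mathcal{Q}_{r,\rho}$, and then to establish injectivity by appealing to the uniqueness of the Laplace--Stieltjes transform. Since $\rho > 0$ and $p$ is bounded, write $G(x) = p(x) x^{\rho}$; by definition of $\mathcal{P}_{r,\rho}$ this is nondecreasing, right-continuous, nonnegative, satisfies $\lim_{x \to 0^+} G(x) = 0$, and grows at most like $M x^{\rho}$. Hence $G$ defines a $\sigma$-finite Borel measure on $(0,\infty)$, and the Laplace--Stieltjes integral in \eqref{eq:def-tildeq} is finite (bounded by $M \rho \Gamma(\rho) s^{-\rho}$) and strictly positive.

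To show $\mathrm{A}_\rho p \in \mathcal{Q}_{r,\rho}$, I would check the two defining conditions separately. First, complete monotonicity of $s^{-\rho} \mathrm{A}_\rho p(s) = \int_0^\infty e^{-sx} \dd G(x)$ is immediate from Bernstein's theorem, since $G$ is nondecreasing. Second, log-periodicity reduces to a change of variable: in the integral $\mathrm{A}_\rho p(sr) = (sr)^{\rho} \int_0^\infty e^{-srx} \dd G(x)$, substitute $y = rx$, which transports $\dd G(x)$ to the measure $\dd \tilde G(y)$ with $\tilde G(y) = G(y/r)$. The log-periodicity of $p$ then gives $G(y/r) = p(y/r)(y/r)^{\rho} = r^{-\rho} p(y) y^{\rho} = r^{-\rho} G(y)$, so $\dd \tilde G = r^{-\rho} \dd G$. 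Collecting factors cancels $r^{\rho}$ against $r^{-\rho}$ and yields $\mathrm{A}_\rho p(sr) = \mathrm{A}_\rho p(s)$.

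For injectivity, suppose $\mathrm{A}_\rho p_1 = \mathrm{A}_\rho p_2$ with $p_1, p_2 \in \mathcal{P}_{r,\rho}$. Then the corresponding nondecreasing right-continuous functions $G_i(x) = p_i(x) x^{\rho}$ have equal Laplace--Stieltjes transforms on $(0,\infty)$, so the induced Borel measures on $(0,\infty)$ coincide. Since $G_i(0^+) = 0$ (here the hypothesis $\rho > 0$ and the boundedness of $p_i$ are essential), this forces $G_1 \equiv G_2$ on $(0,\infty)$, hence $p_1 \equiv p_2$.

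The main technical hurdle I expect is the change-of-variable step for a possibly discontinuous $p$: one must handle the Stieltjes measure of $G$ rather than work with a density, so the identity $\dd[G(y/r)] = r^{-\rho} \dd G(y)$ has to be justified at the level of pushforward measures, and one has to be comfortable that atoms of $p$ are preserved correctly under the scaling $y = rx$. Once this is written cleanly, both the membership claim and the uniqueness claim follow from standard Laplace transform theory.
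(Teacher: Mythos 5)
Your proof correctly verifies that $\mathrm{A}_\rho$ is well-defined (maps $\mathcal{P}_{r,\rho}$ into $\mathcal{Q}_{r,\rho}$) and that it is injective, but it stops there. The word ``one-to-one'' in this lemma is being used in the sense of a one-to-one \emph{correspondence}: the paper's own proof devotes its entire ``Conversely'' paragraph to showing that $\mathrm{A}_\rho$ is \emph{onto} $\mathcal{Q}_{r,\rho}$, and this surjectivity is what is actually needed downstream, since the inverse $\mathrm{A}_\rho^{-1}$ is applied to arbitrary elements of $\mathcal{Q}_{r,\rho}$ in Theorem~\ref{thm:L-tail}, Corollary~\ref{cor:alpha}, and Corollary~\ref{cor:smooth}. (Injectivity, which you argue carefully, is dispatched implicitly in the paper via uniqueness of the Laplace--Stieltjes transform.)

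The missing converse runs as follows: given $q \in \mathcal{Q}_{r,\rho}$, complete monotonicity of $s^{-\rho} q(s)$ yields, by Bernstein's theorem, a nondecreasing right-continuous $g$ with $g(0)=0$ and $s^{-\rho} q(s) = \int_0^\infty e^{-sx}\,\dd g(x)$. Set $p(x) = x^{-\rho} g(x)$; right-continuity and monotonicity of $x^\rho p(x) = g(x)$ are immediate, and the only thing to check is log-periodicity. Replacing $s$ by $rs$ and using $q(rs)=q(s)$ gives
\[
\int_0^\infty e^{-sx}\,\dd g(x) \;=\; \int_0^\infty e^{-sx}\,\dd\bigl[r^\rho g(x/r)\bigr],
\]
so by uniqueness of the Laplace--Stieltjes transform $g(x) = r^\rho g(x/r)$ at all continuity points, hence everywhere by right-continuity, which is precisely $p(x)=p(x/r)$. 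Thus $p \in \mathcal{P}_{r,\rho}$ and $\mathrm{A}_\rho p = q$. Note this is exactly the reverse of your forward-direction change-of-variable computation, so your ``main technical hurdle'' (handling a discontinuous $p$ via pushforward of Stieltjes measures) is the right concern, but it arises in both directions; in the converse direction it is resolved by appealing to uniqueness of the transform rather than by a direct substitution. Adding this converse would complete the proof.
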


\begin{proof}[Proof of Lemma \ref{lemma:Laplace}]
It is clear from the definition that $\mathrm{A}_\rho p \in \mathcal{Q}_{r,\rho}$.

Conversely, let $q \in \mathcal{Q}_{r,\rho}$ be given. 
Since $s^{-\rho} q(s)$ is completely monotone, there is a nondecreasing 
right-continuous function $g:[0,\infty) \to [0, \infty)$, $g(0) = 0$ 
such that 
\begin{equation} \label{eq:def-g}
s^{-\rho} q(s) = \int_0^\infty e^{-sx} \dd g(x) . 
\end{equation}
To prove that $p(x) : =x^{-\rho} g(x) \in \mathcal{P}_{r,\rho}$ we only have to show 
the logarithmic periodicity of $p$. Substituting $s \to rs$ in (\ref{eq:def-g}) and 
using that 
$q(rs) = q(s)$ we obtain that
\[
\int_0^\infty e^{-sx} \dd g(x) = \int_0^\infty e^{-sx} \dd [ r^\rho g(x/r)].  
\]
Uniqueness of the Laplace--Stieltjes transform implies
\[
 g(x) = r^{\rho} g(x/r), \quad x \in C_g,
\]
from which
\[
 p(x) = p(x/r), \quad x \in C_p.
\]
If two right-continuous functions agree in all but countable many points, then they agree 
everywhere.
\end{proof}

For a real function $f$ the set of its continuity points is denoted by $C_f$. 
In the following, $\ell$ stands for a slowly varying function either at infinity, or at 
zero. The set of slowly varying functions at infinity (zero) is denoted by 
$\mathcal{SV}_\infty$ ($\mathcal{SV}_0$).

\begin{theorem} \label{thm:taub}
Let $U : [0, \infty) \to [0, \infty)$ be an increasing function, $\rho \geq 0$, $r > 1$, 
and $\ell \in \mathcal{SV}_\infty$ be a slowly varying function. Then
\begin{equation} \label{eq:U-asy}
\lim_{n \to \infty} \frac{U(r^n z) }{(r^n z)^\rho \ell(r^n z)} = p(z)
\quad \text{for each } z \in C_p, \text{ for some } p \in \mathcal{P}_{r},
\end{equation}
and
\begin{equation} \label{eq:hatU-asy}
\widehat U(s) \sim s^{-\rho} \ell(1/s) q(s)
\quad \text{as } s \downarrow 0, \text{ for some } q \in \mathcal{P}_{r},
\end{equation}
are equivalent. In each case, necessarily $p \in \mathcal{P}_{r, \rho}$, $q \in 
\mathcal{Q}_{r, \rho}$, and $\mathrm{A}_\rho p = q$ for $\rho > 0$, and $p = q$ for $\rho 
= 0$.

Moreover, if $p$ is continuous, then (\ref{eq:U-asy}) implies
\begin{equation} \label{eq:U-asy2}
U(x) \sim x^\rho \ell(x) p(x) \quad \text{as } x \to \infty.
\end{equation}
\end{theorem}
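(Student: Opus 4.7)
The plan is to transfer asymptotics through rescaled functions and the continuity theorem for Laplace--Stieltjes transforms. The case $\rho = 0$ collapses $\mathcal{P}_{r,0}$ and $\mathcal{Q}_{r,0}$ to constants and reduces to the classical Karamata--Feller Tauberian theorem, so I focus on $\rho > 0$ and introduce $U_n(x) = U(r^n x)/[(r^n)^{\rho}\ell(r^n)]$, for which $\widehat U_n(s) = \widehat U(s/r^n)/[(r^n)^{\rho}\ell(r^n)]$. For the direct half, slow variation of $\ell$ turns (\ref{eq:U-asy}) into $U_n(z) \to z^\rho p(z)$ for every $z \in C_p$; since $U_n$ is nondecreasing and $p$ is right-continuous, this forces $p \in \mathcal{P}_{r,\rho}$. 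Proposition \ref{prop:rlp0} supplies $U(x) \leq C x^\rho \ell(x)$ for large $x$, which together with Potter's bound for $\ell$ yields a uniform polynomial majorant $U_n(x) \leq C' x^{\rho+\varepsilon}$ on $[1,\infty)$ and controls the tail $\int_T^\infty e^{-sx}\dd U_n(x)$ uniformly in $n$. Vague convergence on compacts then gives $\widehat U_n(s) \to s^{-\rho}\mathrm{A}_\rho p(s)$ for every $s>0$. Monotonicity of $\widehat U_n$ in $s$ together with continuity of the limit on $(0,\infty)$ upgrades this to uniform convergence on $[1,r]$; writing any $\tau \downarrow 0$ as $\tau = s/r^n$ with $s \in [1,r]$, the log-periodicity of $q := \mathrm{A}_\rho p$ and slow variation of $\ell$ deliver (\ref{eq:hatU-asy}).

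For the converse, the substitution $s \mapsto s/r^n$ in (\ref{eq:hatU-asy}) combined with the log-periodicity of $q$ and slow variation of $\ell$ gives $\widehat U_n(s) \to s^{-\rho} q(s)$ pointwise in $s>0$. The elementary estimate $U_n(x) \leq e^{sx}\widehat U_n(s)$ then makes $(U_n)$ locally uniformly bounded, so Helly's selection theorem extracts subsequential vague limits $V$ from every subsequence. The tail bound, now powered by the identified finite limit, transfers vague convergence to the Laplace transforms, giving $\widehat V(s) = s^{-\rho} q(s)$. Complete monotonicity forces $q \in \mathcal{Q}_{r,\rho}$, and Lemma \ref{lemma:Laplace} produces a unique $p \in \mathcal{P}_{r,\rho}$ with $V(x) = x^\rho p(x)$ and $\mathrm{A}_\rho p = q$. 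Uniqueness of $V$ across subsequences yields $U_n(x) \to x^\rho p(x)$ at every $x \in C_p$, which is (\ref{eq:U-asy}).

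For the final assertion, continuity of $p$ renders the limit $x \mapsto x^\rho p(x)$ continuous, so the pointwise convergence of the nondecreasing $U_n$ is automatically uniform on the compact interval $[1,r]$. Writing $t = r^{n(t)} z(t)$ with $z(t) \in [1,r)$ and using log-periodicity $p(t) = p(z(t))$ then delivers (\ref{eq:U-asy2}). The hard part throughout is the Laplace-transform tail control in both directions, which rests on Proposition \ref{prop:rlp0} together with Potter's bound in the direct half, and on the identified pointwise limit $s^{-\rho} q(s)$ in the converse; a secondary subtlety is the identification step in the converse, where recognising every subsequential vague limit as $x^\rho p(x)$ with $p \in \mathcal{P}_{r,\rho}$ is exactly the content of Lemma \ref{lemma:Laplace}.
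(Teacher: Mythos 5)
Your proof is correct and takes essentially the same route as the paper: rescale to $U_n(x) = U(r^n x)/[(r^n)^\rho\ell(r^n)]$, pass to Laplace--Stieltjes transforms via the continuity theorem (which you unpack as Helly selection plus explicit tail control, itself resting on Proposition \ref{prop:rlp0} and Potter's bound), identify the periodic limits through Lemma \ref{lemma:Laplace}, and then lift the convergence along the discrete sequence $r^n$ to a full asymptotic. The only presentational difference is in that last lifting step: the paper cites Lemma \ref{lemma:cont-q} from \cite{Kevei2} as a black box, whereas you re-derive its content inline via P\'olya's theorem (pointwise convergence of monotone functions to a continuous limit is uniform on compacts) combined with the uniform convergence theorem for slowly varying functions.
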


\begin{remark}
\begin{itemize}
\item[(i)] For $\rho = 0$ the result follows from \cite[Theorem 1.7.1]{BGT}.

\item[(ii)] The equivalence of $U(r^n z) = o(r^n \ell(r^n))$ and $\widehat U(s) = 
o(s^{-\rho} \ell(1/s))$ also follows from \cite[Theorem 1.7.1]{BGT}.

\item[(iii)] For continuous $p$ the ratio Tauberian theorem \cite[Theorem 2.10.1]{BGT}, 
(Korenblyum \cite{Koren}, Feller \cite{Fe}, Stadtm\"uller and Trautner \cite{StT}) states 
that (\ref{eq:hatU-asy}) and (\ref{eq:U-asy2}) are equivalent.
Indeed, by Propositions \ref{prop:rlp0} and \ref{prop:rlp} $U$ is always 
O-regularly varying and $p$ is 
continuous if and only if 
$U^*(\lambda)$ is continuous at 1. Moreover, the Laplace--Stieltjes transform of 
$x^\rho p(x)$ is $s^{-\rho} q(s)$. Theorem 2.10.1 (iii) \cite{BGT} states that the 
continuity of $U^*$ at 1, is also necessary in general for the equivalence of 
(\ref{eq:hatU-asy}) and (\ref{eq:U-asy2}).
\end{itemize}
\end{remark}

\begin{proof}[Proof of Theorem \ref{thm:taub}]
Concerning the first remark above, we may assume that $\rho > 0$.
The proof follows the standard idea of Tauberian theorems (see Theorem 1.7.1 
\cite{BGT}) combined with the following lemma from \cite{Kevei2}.

\begin{lemma} \label{lemma:cont-q}
Assume that $p \in \mathcal{P}_{r}$ is continuous, $\ell \in \mathcal{SV}_\infty$,
$\alpha \in \R$, $U$ is monotone, and
for any $z \in [1,r)$
\[
\lim_{n \to \infty} \frac{U(z r^n)}{(z r^n)^\alpha \ell(r^n)} = p(z).
\]
Then
\[
U(x) \sim x^\alpha \ell(x) p(x). 
\]
\end{lemma}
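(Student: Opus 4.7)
The strategy is to lift the pointwise convergence along the geometric grid $\{z r^n : n \in \N\}$ (for fixed $z$) to a full asymptotic in $x$. The three ingredients are: the monotonicity of $U$, which sandwiches $U(x)$ between its values at two nearby grid points; the continuity and positivity of $p$ on the compact interval $[1, r]$, which makes the sandwich tight; and the uniform convergence of slowly varying functions on compact ratios (Theorem 1.2.1 of \cite{BGT}), which absorbs the difference between $\ell(r^n)$ and $\ell(z r^n)$.

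Assume $U$ is nondecreasing; the nonincreasing case is the mirror image. First, fix $\varepsilon > 0$ and, using uniform continuity and uniform positivity of $p$ on $[1, r]$, choose a finite partition $1 = z_0 < z_1 < \cdots < z_k = r$ such that $(z_i / z_{i-1})^{|\alpha|} \leq 1 + \varepsilon$ for every $i$ and $p(z_j) / p(z) \leq 1 + \varepsilon$ for every $j \in \{i-1, i\}$ and every $z \in [z_{i-1}, z_i]$. Next, applying the hypothesis to each of the finitely many $z_j$, find $n_0$ so that $(1 - \varepsilon)(z_j r^n)^\alpha \ell(r^n) p(z_j) \leq U(z_j r^n) \leq (1 + \varepsilon)(z_j r^n)^\alpha \ell(r^n) p(z_j)$ for every $n \geq n_0$ and every $j$. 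Finally, given any $x \geq r^{n_0}$, write it uniquely as $x = z r^n$ with $n \geq n_0$ and $z \in [1, r)$, pick $i$ with $z \in [z_{i-1}, z_i]$, and use monotonicity together with the log-periodicity $p(x) = p(z)$ to obtain
$$\frac{U(z_{i-1} r^n)}{(z r^n)^\alpha \ell(z r^n) p(z)} \leq \frac{U(x)}{x^\alpha \ell(x) p(x)} \leq \frac{U(z_i r^n)}{(z r^n)^\alpha \ell(z r^n) p(z)}.$$
Both sides reduce, via the finite-grid asymptotics, to expressions of the form $(1 \pm \varepsilon) (z_j/z)^\alpha \cdot \bigl(\ell(r^n)/\ell(z r^n)\bigr) \cdot \bigl(p(z_j)/p(z)\bigr)$; each of the three factors is within $1 \pm O(\varepsilon)$, the first by the partition, the second by the uniform convergence theorem for $\mathcal{SV}_\infty$ (taking $n_0$ larger if needed), and the third by the partition again. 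Since $\varepsilon$ was arbitrary, $U(x)/(x^\alpha \ell(x) p(x)) \to 1$.

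The main obstacle is really just careful bookkeeping that turns the pointwise hypothesis into a uniform estimate. Continuity of $p$ is essential: without it a finite partition could not catch every $z \in [1, r)$ within a small window of some $z_j$ at which convergence holds, and the sandwich would retain a fixed-size jump. The sign of $\alpha$ must also be tracked so that $(z_i/z)^\alpha$ and $(z_{i-1}/z)^\alpha$ fall on the correct side of $1$; writing the partition bound as $(z_i/z_{i-1})^{|\alpha|}$ handles both cases simultaneously, and the same remark applies when swapping the monotonicity direction of $U$.
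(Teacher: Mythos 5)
Your argument is correct and is the natural grid-to-continuum lifting one expects; note, though, that the paper does not give its own proof of this lemma --- it is cited from \cite{Kevei2} --- so there is no in-text proof to compare against. Your three ingredients (monotone sandwich, uniform continuity and uniform positivity of $p$ on the compact period interval, and uniform convergence of slowly varying functions on compact ratio sets) are exactly what is needed, and the assembly is sound.

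Two small points of bookkeeping that you should tighten up. First, your stated partition condition $p(z_j)/p(z) \le 1 + \varepsilon$ is one-sided, but the lower-bound half of the sandwich needs $p(z_{i-1})/p(z) \ge 1 - O(\varepsilon)$ as well; uniform continuity together with $\inf_{[1,r]} p > 0$ does give the two-sided bound, and your concluding remark ``within $1 \pm O(\varepsilon)$'' shows you intend this, but the condition as written does not say it. Second, your partition endpoint is $z_k = r$, and for the upper bound you invoke the hypothesis at $z_k$; but the hypothesis is stated only for $z \in [1,r)$. This is harmless --- writing $U(r\cdot r^n) = U(1 \cdot r^{n+1})$, using the hypothesis at $z=1$ with $n+1$, the periodicity $p(r)=p(1)$, and $\ell(r^{n+1})/\ell(r^n) \to 1$ recovers the needed estimate --- but the reduction deserves a sentence since $z = r$ falls outside the stated grid.
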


The monotonicity of $U$ and (\ref{eq:U-asy}) readily imply that $p \in 
\mathcal{P}_{r,\rho}$. From Proposition \ref{prop:rlp0}
\begin{equation} \label{eq:U-bound}
\limsup_{x \to \infty} \frac{U(x)}{x^\rho \ell(x) } = K < \infty 
\end{equation}
follows. Using Potter's bounds we obtain
\[
\begin{split}
\widehat U(x^{-1}) & = \int_0^\infty e^{-y/x} \dd U (y) \\
& \leq U(x) + \sum_{n=1}^{\infty} e^{-2^{n-1}} U(2^n x) \\
& \leq 2 K x^{\rho} \ell(x) \left[ 1 + \sum_{n=1}^\infty 
e^{-2^{n-1}} 2^{n(\rho +1)} \right]. 
\end{split}
\]
Therefore $\widehat U(x^{-1}) / (x^\rho \ell(x))$ is bounded. Introduce the notation
\[
U_x (y) = \frac{U(xy)}{x^\rho \ell(x)}.
\]
Using the logarithmic periodicity, for any $z > 0$ we have
\[
\lim_{n \to \infty} U_{r^n z}(y) = y^\rho p(zy) =: V_z(y)
\quad \text{for all $y$ such that } zy \in C_p.
\]
Simply
\[
\widehat U_x(s) = \frac{\widehat U(s/x)}{x^{\rho} \ell(x) }.
\]
Since 
$U_{r^n z}(y)$ converges, using the continuity and uniqueness theorem for 
Laplace--Stieltjes transforms, we obtain that
\[
\lim_{n \to \infty} \frac{\widehat U(s/(r^n z))}{(r^n z)^\rho \ell (r^n z) }
= \widehat V_z(s)
\]
for all $s >0$, since $\widehat V_z$, being a Laplace--Stieltjes transform, is continuous.
Choosing $s=1$, after short calculation we have
\[
\lim_{n \to \infty} 
\frac{\widehat U(1/(r^n z))}{(r^n z)^\rho \ell (r^n z)} =  q(1/z),
\]
with $q = \mathrm{A}_\rho p$.
Since $q$ is continuous, Lemma \ref{lemma:cont-q} implies
\[
\widehat U(s) \sim  s^{-\rho} \ell(1/s) q(s) \quad \text{as } s \downarrow 
0,
\]
as stated.

For the converse, note that (\ref{eq:hatU-asy}) implies
\[
\widehat U_x (s) = \frac{\widehat U(s/x)}{x^\rho \ell(x)}
\sim  s^{-\rho} q(s/x)
\quad \text{as } x \to \infty.
\]
Since $q \in \mathcal{P}_{r}$ we have for any $z >0$
\begin{equation} \label{eq:hatUx}
\lim_{n \to \infty} \widehat U_{r^n z}(s) =  s^{-\rho} q(s/z).
\end{equation}
Therefore, the continuity theorem gives
\[
\lim_{n \to \infty} U_{r^n z}(y) = u_z(y), \quad y \in C_{u_z}
\]
for some nondecreasing function $u_z$. Thus 
$\widehat u_z(s) =  s^{-\rho} q(s/z)$, which implies $q \in \mathcal{Q}_{r,\rho}$.
Short calculation shows that the right-hand side of (\ref{eq:hatUx}) is the 
Laplace--Stieltjes transform of $u_z(y) :=  y^\rho p(zy)$.
Note that $1 \in C_{u_z}$ whenever $z \in C_p$, thus
(\ref{eq:U-asy}) holds for all $z \in C_p$. The second statement follows from Lemma 
\ref{lemma:cont-q}.
\end{proof}

The same proof gives analogous result in the case $x \downarrow 0$, $s \to \infty$; see
\cite[Theorem 1.7.1']{BGT}.

\subsection{Karamata and monotone density theorems}

Let $\mathcal{P}_{r,\rho}^{m}$ denote the set of functions in $\mathcal{P}_{r,\rho}$, 
which are $m$-times differentiable on $(0,\infty)$ (we do not assume continuity of the 
$m$th derivative).
For $r > 1$ and $\rho > 0$ introduce the operator 
$\mathrm{B}_{r, \rho} = \mathrm{B}_{\rho}: \mathcal{P}_{r} \to 
\mathcal{P}_{r,\rho}^{1}$
\begin{equation} \label{eq:defB}
\mathrm{B}_\rho p (x) = x^{-\rho} \int_0^x y^{\rho -1 } p(y) \dd y. 
\end{equation}
Using the logarithmic periodicity, short calculation shows that
\[
\int_0^{r^m} s^{\rho -1} p(s) \dd s = \frac{r^{m\rho}}{r^\rho -1} \int_1^r s^{\rho-1} 
p(s) 
\dd s, 
\]
and thus
\begin{equation} \label{eq:Bp2form}
\mathrm{B}_\rho p(x) =  r^{-\rho \{ \log_r x \}} \left[ 
\frac{1}{r^\rho -1} \int_1^r s^{\rho-1} p(s) \dd s +
\int_1^{r^{\{ \log_r x \}}} s^{\rho-1} p(s) \dd s \right],
\end{equation}
where $\{ x \} = x - \lfloor x \rfloor$ stands for the fractional part of $x$.
It is easy to see that $\mathrm{B}_\rho p \in \mathcal{P}_{r,\rho}^{1}$. Moreover, it is 
one-to-one 
with inverse
\begin{equation} \label{eq:Binv}
{\mathrm{B}}_\rho^{-1} q (x) =  x^{1-\rho} \frac{\dd}{\dd x} [x^\rho q(x)], \quad q \in 
\mathcal{P}_{r,\rho}^{1}.
\end{equation}

The following statement is a Karamata type theorem for regularly log-periodic functions; 
see \cite[Theorem 1.5.11]{BGT}.

\begin{theorem} \label{thm:Karamata}
Assume that for some $\rho > 0$,
\begin{equation} \label{eq:u-asy}
\lim_{n \to \infty} \frac{u(r^n z)}{(r^n z)^{\rho -1} \ell(r^n z)} = p_0(z) 
\quad \text{for each } z \in C_{p_0},
\text{ for some } p_0 \in \mathcal{P}_{r},
\end{equation}
and 
\begin{equation} \label{eq:limsup-u}
\limsup_{x \to \infty} \frac{u(x)}{x^{\rho -1} \ell(x)} < \infty. 
\end{equation}
Then
\begin{equation} \label{eq:utoU}
U(x) = \int_0^x u(y) \dd y \sim x^\rho \ell(x) p(x) \quad \text{as } x \to \infty, 
\end{equation}
where $p= \mathrm{B}_\rho p_0$.
\end{theorem}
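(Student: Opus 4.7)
My plan is to express $U$ as a scaled integral of $u$, pass to the limit along the subsequence $x_n = r^n z$ by dominated convergence, and finally lift the subsequence statement via Lemma \ref{lemma:cont-q}. For any $z > 0$, the change of variables $y = r^n z t$ gives
\[
\frac{U(r^n z)}{(r^n z)^\rho \ell(r^n z)} = \int_0^1 \frac{u(r^n z t)}{(r^n z)^{\rho-1}\ell(r^n z)} \dd t,
\]
and rewriting the integrand as
\[
\frac{u(r^n z t)}{(r^n z t)^{\rho-1}\ell(r^n z t)} \cdot t^{\rho-1} \cdot \frac{\ell(r^n z t)}{\ell(r^n z)},
\]
the hypothesis (\ref{eq:u-asy}) together with slow variation of $\ell$ shows that it converges to $t^{\rho-1} p_0(zt)$ for each $t > 0$ with $zt \in C_{p_0}$, hence for a.e.\ $t \in (0,1]$.

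Next I would split $\int_0^1 = \int_0^\epsilon + \int_\epsilon^1$ for small $\epsilon > 0$. On $[\epsilon, 1]$ the first factor is bounded for large $n$ by (\ref{eq:limsup-u}), Potter's bounds dominate the slowly varying ratio by $2\max(t^\delta, t^{-\delta})$ for any small $\delta>0$, and $t^{\rho-1}$ is bounded since $t$ stays away from $0$; dominated convergence thus yields
\[
\int_\epsilon^1 \frac{u(r^n z t)}{(r^n z)^{\rho-1}\ell(r^n z)} \dd t \ \longrightarrow \ \int_\epsilon^1 t^{\rho-1} p_0(zt) \dd t.
\]
For the near-zero piece, reversing the substitution gives
\[
\int_0^\epsilon \frac{u(r^n z t)}{(r^n z)^{\rho-1}\ell(r^n z)} \dd t = \frac{U(r^n z \epsilon)}{(r^n z)^\rho \ell(r^n z)},
\]
and from (\ref{eq:limsup-u}) we have $u(y) \leq C y^{\rho-1}\ell(y)$ eventually. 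The classical Karamata asymptotic $\int_{y_0}^x y^{\rho-1}\ell(y) \dd y \sim x^\rho \ell(x)/\rho$, valid since $\rho > 0$, converts this to $U(x) = O(x^\rho \ell(x))$, whence
\[
\limsup_{n \to \infty} \frac{U(r^n z \epsilon)}{(r^n z)^\rho \ell(r^n z)} \leq C' \epsilon^\rho \ \longrightarrow \ 0
\]
as $\epsilon \downarrow 0$. Combining the two pieces, letting $\epsilon \downarrow 0$, and substituting $s = zt$ identifies the limit as $z^{-\rho}\int_0^z s^{\rho-1} p_0(s) \dd s = \mathrm{B}_\rho p_0(z) = p(z)$.

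Since $p = \mathrm{B}_\rho p_0 \in \mathcal{P}_{r,\rho}^{1}$ is continuous, Lemma \ref{lemma:cont-q} upgrades the convergence along $r^n z$ for each $z \in [1, r)$ into the full asymptotic (\ref{eq:utoU}). I expect the main obstacle to be controlling the near-zero contribution: (\ref{eq:limsup-u}) is only an $O$-bound on $u$, and must be transferred to an $O$-bound on $U$ via the Karamata integral estimate; this is precisely where the assumption $\rho > 0$ is genuinely used. The dominated-convergence step on $[\epsilon, 1]$ is then routine given the pointwise convergence and Potter's bounds.
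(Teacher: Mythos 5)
Your argument is correct and follows essentially the same route as the paper: both split off the near-zero contribution, apply dominated convergence on $[\epsilon,1]$ using \eqref{eq:limsup-u} as majorant and countability of the discontinuities of $p_0$, bound $U(r^n z\epsilon)/((r^n z)^\rho\ell(r^n z))$ by $O(\epsilon^\rho)$ via the $O$-bound on $U$ deduced from \eqref{eq:limsup-u}, and then upgrade the $r^n z$-subsequence limit $\mathrm{B}_\rho p_0(z)$ to the full asymptotic via Lemma \ref{lemma:cont-q} and continuity of $\mathrm{B}_\rho p_0$. The only cosmetic difference is that you derive $\limsup U(x)/(x^\rho\ell(x))<\infty$ explicitly from Karamata's integral estimate, whereas the paper cites Proposition 1.5.8 of \cite{BGT}.
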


\begin{remark} \label{rem:Karamata}
\begin{itemize}
\item[(i)] For continuous $p_0$ condition
\[
u(x) \sim x^{\rho-1} \ell(x) p_0(x) \quad \text{as } x \to \infty,
\]
implies (\ref{eq:utoU}); see Lemma 3 by Buldygin and Pavlenkov 
\cite{BP2}. (Compare with formula (\ref{eq:Bp2form}). Note that 
our $\rho$ and their $\rho$ are different.) 

\item[(ii)] It is again straightforward to extend this result to the case when the limit 
in (\ref{eq:u-asy}) is zero.
\end{itemize}
 
\end{remark}

\begin{proof}[Proof of Theorem \ref{thm:Karamata}]
From (\ref{eq:limsup-u}) we readily obtain as in \cite[Proposition 1.5.8]{BGT} that
\begin{equation} \label{eq:limsup-U}
\limsup_{x \to \infty} \frac{U(x)}{x^\rho \ell(x)} < \infty. 
\end{equation}
Short calculation gives for any $0 < \varepsilon < 1$
\[
\frac{U( r^n z) - U(r^n z \varepsilon) }{(r^n z)^\rho \ell(r^n z) } = 
\int_{\varepsilon}^1 \frac{u(r^n z t)}{(r^n z t )^{\rho -1} \ell(r^n z t)}
t^{\rho -1} \frac{\ell(r^n z t)}{\ell(r^n z)} \dd t.
\]
Whenever $t z \in C_p$ the integrand converges to $p_0(tz) t^{\rho -1}$. Since the set of 
discontinuity points of a right-continuous function is at most countable, and integrable 
majorant exists by (\ref{eq:limsup-u}) we see
\[
\lim_{n \to \infty} \frac{U( r^n z) - U(r^n z \varepsilon) }{(r^n z)^\rho \ell(r^n z) }
= \int_\varepsilon^1 t^{\rho -1} p_0(tz) \, \dd t.
\]
Finally, (\ref{eq:limsup-U}) implies
\[
\limsup_{\varepsilon \downarrow 0} \limsup_{n \to \infty}  
\frac{ U(r^n z \varepsilon) }{(r^n z)^\rho \ell(r^n z) } = 0.
\]
Combining the latter two limit relations we obtain
\begin{equation} \label{eq:U-limits}
\lim_{n \to \infty} \frac{U( r^n z) }{(r^n z)^\rho \ell(r^n z) }
= \int_0^1 t^{\rho -1} p_0(tz) \dd t = z^{-\rho} \int_0^z s^{\rho -1} p_0(s) \dd s =
\mathrm{B}_\rho p_0(z).
\end{equation}
Since $\mathrm{B}_\rho p_0$ is continuous, the statement follows from Lemma 
\ref{lemma:cont-q}.
\end{proof}

The statement remains true for $\rho = 0$ in the following version.

\begin{lemma} \label{lemma:Karamatar=0}
Assume that for some $p_0 \in \mathcal{P}_{r}$
\begin{equation} \label{eq:u-asyr=0}
\lim_{n \to \infty} \frac{r^n z \, u(r^n z)}{\ell(r^n z)} = p_0(z) 
\quad \text{for each } z \in C_{p_0},
\end{equation}
and
\begin{equation} \label{eq:liminfsup-u}
0 < \liminf_{x \to \infty} \frac{x u(x)}{\ell(x)}  \leq
\limsup_{x \to \infty} \frac{x u(x)}{\ell(x)} < \infty. 
\end{equation}
Then $U(x) = \int_0^x u(y) \dd y$ is slowly varying, and 
$\lim_{x \to \infty} U(x) / \ell(x) = \infty$. 
\end{lemma}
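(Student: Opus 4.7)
My plan is to derive both conclusions from (\ref{eq:liminfsup-u}) alone; assumption (\ref{eq:u-asyr=0}) is merely the natural setup inherited from Theorem \ref{thm:Karamata} and does not enter the argument. The idea is that (\ref{eq:liminfsup-u}) pins $u(y)$ between two constant multiples of $\ell(y)/y$ for large $y$, after which both statements reduce to classical facts about the primitive $L(x) = \int^x \ell(y)/y\,\dd y$.

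First I would extract constants $0 < c \leq C$ and a threshold $x_0$ such that $c\,\ell(y)/y \leq u(y) \leq C\,\ell(y)/y$ for all $y \geq x_0$; in particular $u$ is eventually positive and $U$ is eventually strictly increasing, with $U(x_1) > 0$ for any fixed $x_1 > x_0$.

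Second, to prove $U(x)/\ell(x) \to \infty$, I would use $U(x) \geq U(x_1) + c L(x)$ with $L(x) = \int_{x_1}^x \ell(y)/y\,\dd y$, and distinguish two cases. If $L(x) \to \infty$, the boundary case $\sigma = -1$ of Karamata's integral theorem (\cite[Proposition 1.5.9a]{BGT}) gives $\ell(x) = o(L(x))$, and we are done. Otherwise $L$ is bounded, so $\int_x^{rx}\ell(y)/y\,\dd y \to 0$; combined with the uniform convergence theorem for slowly varying functions (which gives $\int_x^{rx}\ell(y)/y\,\dd y \sim \ell(x)\log r$), this forces $\ell(x) \to 0$, and then $U(x)/\ell(x) \geq U(x_1)/\ell(x) \to \infty$.

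Third, for slow variation of $U$, the upper bound combined with uniform convergence yields $0 \leq U(\lambda x) - U(x) \leq C \int_x^{\lambda x} \ell(y)/y\,\dd y \sim C\,\ell(x)\log\lambda$ for $\lambda > 1$, with the symmetric estimate for $0 < \lambda < 1$; dividing by $U(x)$ and invoking Step 2 gives $U(\lambda x)/U(x) \to 1$, establishing slow variation.

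The main obstacle lies in Step 2: Karamata's asymptotic $\int^x y^\sigma \ell(y)\,\dd y \sim x^{\sigma+1}\ell(x)/(\sigma+1)$ breaks down precisely at $\sigma = -1$, so the two regimes ($L$ divergent vs.\ $\ell$ forced to decay to zero) must be treated separately, with the argument in the second regime relying crucially on slow variation to produce $\ell(x) \to 0$.
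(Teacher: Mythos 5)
Your proof is correct, and you rightly observe that (\ref{eq:u-asyr=0}) plays no role in either conclusion. You do, however, take a different route than the paper, which follows \cite[Proposition 1.5.9a]{BGT} closely. For the claim $U(x)/\ell(x) \to \infty$, the paper avoids your two-case split (between $L$ divergent and $L$ bounded forcing $\ell \to 0$) via a single estimate: with $k = \liminf_{x\to\infty} x u(x)/\ell(x) > 0$, for every fixed $\varepsilon \in (0,1)$ one has
\[
\liminf_{x \to \infty} \frac{U(x)}{\ell(x)} \geq \frac{k}{2} \liminf_{x \to \infty} \frac{1}{\ell(x)} \int_{\varepsilon x}^x \frac{\ell(y)}{y}\,\dd y = \frac{k}{2}\log\varepsilon^{-1},
\]
using only the uniform convergence theorem, and letting $\varepsilon \downarrow 0$ yields the claim in one stroke without invoking the boundary case $\sigma=-1$ of Karamata's theorem. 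For slow variation, the paper sets $\varepsilon(x) = x u(x)/U(x)$, notes that $\varepsilon(x)\to 0$ (the $\limsup$ bound together with $\ell(x)/U(x)\to 0$), observes $\frac{\dd}{\dd x}\log U(x) = \varepsilon(x)/x$, and concludes via the representation theorem \cite[Theorem 1.3.1]{BGT}. Your Step 3 instead verifies $U(\lambda x)/U(x)\to 1$ directly from the two-sided bound on $u$, which is equally valid and a bit more elementary, at the cost of breaking the parallel with the standard Karamata proof. In short: your argument is sound, but the $\varepsilon\downarrow 0$ estimate the paper uses is both shorter and more robust than the $L$-bounded/$L$-unbounded dichotomy, and is worth adding to your toolbox.
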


\begin{remark}
As for Theorem \ref{thm:Karamata}, condition (\ref{eq:liminfsup-u}) is 
not very restrictive, and necessary in general.
\end{remark}

\begin{proof}
The proof is almost identical to the proof of \cite[Proposition 1.5.9a]{BGT}.

Put
\[
\liminf_{x \to \infty} \frac{x u(x) }{\ell(x)} =: k > 0.
\]
Then
\[
\liminf_{x \to \infty} \frac{U(x)}{\ell(x)} \geq  \frac{k}{2} \liminf_{x \to \infty} 
\frac{1}{\ell(x)} \int_{\varepsilon x}^x \frac{\ell(y)}{y} \dd y = \frac{k}{2} \log 
\varepsilon^{-1}.
\]
As $\varepsilon \downarrow 0$ we get $\lim_{x \to \infty} U(x) / \ell(x) = \infty$. Put 
$\varepsilon(x) = x u(x) / U(x)$. We showed that $\lim_{x \to \infty} \varepsilon(x) = 
0$. Noticing
\[
\frac{\dd}{\dd x} \log U(x) = \frac{U'(x)}{U(x)} = \frac{\varepsilon(x)}{x},
\]
the representation theorem of slowly varying functions (\cite[Theorem 1.3.1]{BGT}) 
finishes the proof.
\end{proof}

The converse part of Theorem \ref{thm:Karamata} is the corresponding monotone density 
theorem. 

\begin{theorem} \label{thm:mondens}
Assume that $U(x) = \int_0^x u(y) \dd y$, $u$ is ultimately monotone, and 
(\ref{eq:U-asy}) holds with $\rho \geq 0$. If $\rho > 0$, then $p = \mathrm{B}_\rho p_0$ 
for some $p_0 \in \mathcal{P}_{r}$. For $\rho =0$ let $p_0(x) \equiv 0$.  In both 
cases
\[
\lim_{n \to \infty} \frac{u(r^n z)}{(r^n z)^{\rho -1} \ell(r^n z)} =
p_0(z) \quad \text{for each } z \in C_{p_0}.
\]
Moreover, if $p_0$ is continuous, then
\[
u(x) \sim x^{\rho-1} \ell(x) p_0(x) \quad \text{as } x \to \infty. 
\]
\end{theorem}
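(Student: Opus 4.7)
The plan is to exploit the ultimate monotonicity of $u$ to sandwich $u$ between difference quotients of $U$, and transfer the log-periodic asymptotic of $U$ to $u$. Without loss of generality assume $u$ is ultimately nonincreasing; the nondecreasing case is analogous with reversed inequalities. For large $x$ and any $0 < \lambda < 1 < \mu$, monotonicity gives
\[
\frac{U(\mu x) - U(x)}{(\mu - 1)x} \,\leq\, u(x) \,\leq\, \frac{U(x) - U(\lambda x)}{(1 - \lambda)x}.
\]
Setting $x = r^n z$, dividing by $(r^n z)^{\rho-1}\ell(r^n z)$, and letting $n \to \infty$ via (\ref{eq:U-asy}) for $z,\mu z,\lambda z \in C_p$ (together with slow variation of $\ell$) yields
\[
\frac{\mu^\rho p(\mu z) - p(z)}{\mu - 1} \,\leq\, \liminf_{n\to\infty} \tilde u_n(z) \,\leq\, \limsup_{n\to\infty} \tilde u_n(z) \,\leq\, \frac{p(z) - \lambda^\rho p(\lambda z)}{1 - \lambda},
\]
where $\tilde u_n(z) := u(r^n z)/((r^n z)^{\rho-1}\ell(r^n z))$.

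For $\rho = 0$, $p$ is constant and both bounds vanish, giving $\tilde u_n(z) \to 0 = p_0(z)$ directly. For $\rho > 0$, I would apply Helly's selection theorem to the monotone (in $z$) sequence $z\mapsto u(r^n z)/(r^{n(\rho-1)}\ell(r^n))$, which is uniformly bounded on compact subsets of $(0,\infty)$ by the sandwich, to extract a subsequence $(n_k)$ with $\tilde u_{n_k}(z) \to p_0(z)$ at continuity points of a monotone limit. The exact log-self-similarity $\tilde u_{n+1}(z) = \tilde u_n(rz)$ forces $p_0(rz) = p_0(z)$, so its right-continuous representative is logarithmically periodic. To identify $p_0$, the change of variables
\[
\frac{U(r^{n_k}z)}{r^{n_k\rho}\ell(r^{n_k})} \,=\, \int_0^z \tilde u_{n_k}(t)\,t^{\rho-1}\,\frac{\ell(r^{n_k}t)}{\ell(r^{n_k})}\,dt,
\]
combined with (\ref{eq:U-asy}) on the left-hand side and dominated convergence on the right-hand side (majorant $C t^{\rho-1}$ on $(0,z]$, integrable because $\rho > 0$), produces $z^\rho p(z) = \int_0^z t^{\rho-1}p_0(t)\,dt$, i.e., $p = \mathrm{B}_\rho p_0$. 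This uniquely determines the right-continuous $p_0$, so the full sequence converges to $p_0(z)$ at every $z \in C_{p_0}$.

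Membership $p_0 \in \mathcal{P}_r$ follows from the sandwich (boundedness), construction (right-continuity), log-periodicity as above, and the integral identity combined with $p > 0$ (to force $\inf_{[1,r]} p_0 > 0$). The final ``moreover'' assertion, when $p_0$ is continuous, follows from Lemma \ref{lemma:cont-q} applied to $u$ with exponent $\rho - 1$ and periodic limit $p_0$. The main obstacle I anticipate is the dominated-convergence step: producing a genuine uniform majorant on compacts of $(0,\infty)$ from the sandwich and controlling the integrand as $t\downarrow 0$ (where the change of variables involves $u$ at small arguments); a related subtlety is verifying that the Helly limit actually lies in $\mathcal{P}_r$, in particular that $\inf_{[1,r]} p_0 > 0$.
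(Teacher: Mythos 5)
Your plan runs along the same lines as the paper's proof: use ultimate monotonicity to get local uniform bounds on the normalized density, invoke Helly's selection theorem for a subsequential log-periodic limit $p_0$, identify $p_0$ via the relation $p = \mathrm{B}_\rho p_0$ to force full-sequence convergence, and finish with Lemma~\ref{lemma:cont-q} when $p_0$ is continuous. The point where you diverge, and where you correctly flag a gap, is the identification step. You want to pass to the limit in
\[
\frac{U(r^{n_k}z)}{r^{n_k\rho}\,\ell(r^{n_k})}=\int_0^z \tilde u_{n_k}(t)\,t^{\rho-1}\,\frac{\ell(r^{n_k}t)}{\ell(r^{n_k})}\,\dd t,
\]
but near $t=0$ no dominated-convergence majorant is available: as $t\downarrow 0$ the argument $r^{n_k}t$ need not be large, so the asymptotics of $u$ and the uniform convergence theorem for $\ell$ do not apply; the sandwich bounds you derived are pointwise limsup bounds, not uniform bounds on $(0,z]$; and the factor $\ell(r^{n_k}t)/\ell(r^{n_k})$ is uncontrolled as $t\to 0$.

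The paper avoids this entirely by never integrating down to $0$: it passes to the limit in $\int_a^b \frac{u(sx)}{x^{\rho-1}\ell(x)}\,\dd s = \frac{U(bx)-U(ax)}{x^{\rho}\ell(x)}$ for fixed $0<a<b$, obtains $\int_a^b v_z(s)\,\dd s = b^\rho p(bz)-a^\rho p(az)$ at continuity points, and reads off $v_z(s)=s^{\rho-1}p_0(sz)$ with $p_0=\mathrm{B}_\rho^{-1}p$ by differentiation, rather than integration from zero. Your argument can be repaired in the same spirit: replace $\int_0^z$ by $\int_a^z$ (where boundedness on $[a,z]$ follows from monotonicity and pointwise convergence at the endpoint), pass to the limit in $k$ to get $\int_a^z p_0(t)t^{\rho-1}\,\dd t = z^\rho p(z)-a^\rho p(a)$, and then let $a\downarrow 0$ using monotone convergence on the left and boundedness of $p$ on the right ($a^\rho p(a)\to 0$ since $\rho>0$). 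As written, though, the proposal leaves this step open, so it is not a complete proof; the rest (the $\rho=0$ case, the log-periodicity of $p_0$ via $\tilde u_{n+1}(z)=\tilde u_n(rz)$, and the appeal to Lemma~\ref{lemma:cont-q}) is sound and matches the paper.
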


\begin{remark}
\begin{itemize}
\item[(i)] We see from the statement that if (\ref{eq:U-asy}) holds, and $U$ has an 
ultimately monotone density, then necessarily  $p$ in (\ref{eq:U-asy}) is differentiable. 

\item[(ii)] Note that for $\rho = 0$ the statement follows from the `usual' monotone 
density theorem  \cite[Theorem 1.7.2]{BGT}, since $p \in \mathcal{P}_{r}$ is necessarily 
constant. Theorem 1.7.2 \cite{BGT} also implies that the result remains true when 
the limit $p$ in (\ref{eq:U-asy}) is zero.
\end{itemize}
\end{remark}

\begin{proof}[Proof of Theorem \ref{thm:mondens}]
By (\ref{eq:U-asy})
\[
\frac{U(bx) - U(ax)}{x^\rho \ell(x)} = \int_a^b \frac{u(sx)}{x^{\rho-1} \ell(x)}  \dd s
\]
is bounded as $x \to \infty$. Since $u$ is ultimately monotone, this readily 
implies that the integrand is bounded too as $x \to \infty$, which allows us to use 
Helly's selection theorem. Fix $z > 0$, and consider the sequence $r^n z$. By the 
selection theorem, there is a subsequence 
$n_k$ and a monotone limit function $v_z$ such that
\begin{equation} \label{eq:uconv-ss}
\lim_{k \to \infty} \frac{u(r^{n_k} z s)}{(r^{n_k} z)^{\rho-1} \ell(r^{n_k} z)} = v_z(s) 
\quad \text{for each } s \in C_{v_z}.
\end{equation}
On the other hand, $U(xy)/(x^\rho \ell(x))$ converges on the sequence $r^n z$, thus 
for the limit function $v_z$
\begin{equation} \label{eq:v-zdet}
\int_a^b v_z(s) \dd s =   b^\rho p(b z) - a^\rho p(a z)
\end{equation}
for $0 < a < b < \infty$ such that $az, bz \in C_p$. This clearly determines the limit 
function in its continuity points, and so
the convergence in (\ref{eq:uconv-ss}) holds along the whole sequence $n$. The latter 
implies that $v_z(rs) = r^{\rho -1} v_z(s)$. From (\ref{eq:v-zdet}) we have that $p \in 
\mathcal{P}_{r,\rho}^{1}$. Let $p_0 = \mathrm{B}_\rho^{-1} p$. By (\ref{eq:Binv})
\[
v_z(s) =  \frac{\dd}{\dd s} (s^\rho p(s z)) =  s^{\rho-1} p_0(s z).
\]
If $z \in C_{p_0}$, then $s =1$ is a continuity point of $v_z$ in (\ref{eq:uconv-ss}), 
and the first statement follows. The second follows from Lemma \ref{lemma:cont-q}.
\end{proof}

The following statements are versions of the previous results, which we need later. Since 
the proofs are the same, we omit them.

First we deal with the case when $\rho < 0$. Similarly as before let 
$\mathcal{P}_{r,\rho}^{1}$ denote the set of functions in $\mathcal{P}_{r,\rho}$, which 
are
differentiable on $(0,\infty)$. For $r > 1$ and $\rho < 0$ introduce the operator 
$\mathrm{B}_{r,\rho} = \mathrm{B}_\rho: \mathcal{P}_{r} \to 
\mathcal{P}_{r,\rho}^{1}$
\begin{equation} \label{eq:defB2}
\mathrm{B}_\rho p (x) = x^{-\rho} \int_x^\infty y^{\rho -1 } p(y) \dd y. 
\end{equation}
As before $\mathrm{B}_\rho p \in \mathcal{P}_{r,\rho}^{1}$, and it is one-to-one 
with inverse
\begin{equation} \label{eq:Binv2}
{\mathrm{B}}^{-1}_\rho q (x) =  - x^{1-\rho} \frac{\dd}{\dd x} [x^\rho q(x)], \quad q \in 
\mathcal{P}_{r,\rho}^{1}.
\end{equation}

\begin{proposition} \label{prop:rho<0}
Let $U(x) = \int_x^\infty u(y) \dd y$, where $u$ is ultimately monotone, $r > 1, \rho < 
0$.
Then
\[
\lim_{n \to \infty} \frac{u(r^n z)}{(r^n z)^{\rho -1} \ell(r^{n} z)} =
p_0(z) \quad \text{for each } z \in C_{p_0},
\text{ for some } p_0 \in \mathcal{P}_{r},
\]
if and only if
\[
\lim_{n \to \infty} \frac{U(r^n z)}{(r^n z)^{\rho} \ell(r^n z)} =
p(z) \quad \text{for each } z \in C_{p},
\text{ for some } p \in \mathcal{P}_{r}.
\]
Moreover, $p = \mathrm{B}_\rho p_0$, in particular $p \in \mathcal{P}_{r,\rho}$ is 
continuous, thus
\[
U(x)  \sim x^{\rho} \ell(x) p(x) \quad \text{as } x \to \infty.
\]
For $\rho = 0$ assume further that $\int_0^\infty u(y) \dd y < \infty$. Then $U \in 
\mathcal{SV}_\infty$, and $\lim_{x \to \infty} U(x) / \ell(x) = \infty$.
\end{proposition}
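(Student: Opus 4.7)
The plan is to mirror the proofs of Theorems \ref{thm:Karamata} and \ref{thm:mondens}, adapted to the tail integral $U(x) = \int_x^\infty u(y)\,\dd y$ and the sign $\rho < 0$. Once both directions of the equivalence are established, the continuity of $p = \mathrm{B}_\rho p_0$ is immediate from its definition as an integral of a bounded function, and Lemma \ref{lemma:cont-q} upgrades the sequential asymptotic to the stated $U(x) \sim x^\rho \ell(x) p(x)$. Note that $u$ ultimately monotone together with the convergence $u(r^n z)/((r^n z)^{\rho-1}\ell(r^n z)) \to p_0(z) > 0$ (which forces $u(x) \to 0$) makes $u$ ultimately decreasing, guaranteeing that $U$ is well-defined.

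For the Karamata-type direction (from $u$ to $U$), the key identity, via the substitution $y = r^n z t$, is
\[
\frac{U(r^n z)}{(r^n z)^\rho \ell(r^n z)}
= \int_1^\infty \frac{u(r^n z t)}{(r^n z t)^{\rho-1} \ell(r^n z t)}\, t^{\rho-1}\, \frac{\ell(r^n z t)}{\ell(r^n z)}\, \dd t.
\]
For each $t$ with $tz \in C_{p_0}$ (a co-countable set) the integrand converges to $p_0(tz) t^{\rho-1}$. To interchange limit and integral I would invoke dominated convergence: Proposition \ref{prop:rlp0} applied to $u$ gives $u(x) \leq K x^{\rho-1} \ell(x)$ for large $x$, so the first factor is uniformly bounded, and Potter's bound on $\ell$ yields $\ell(r^n z t)/\ell(r^n z) \leq C t^\delta$ for arbitrarily small $\delta > 0$ and all sufficiently large $n$. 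Choosing $\delta < |\rho|$ makes the majorant $CK t^{\rho-1+\delta}$ integrable on $[1,\infty)$; this is precisely where $\rho < 0$ is used. The limit then equals $\int_1^\infty p_0(tz) t^{\rho-1}\,\dd t = z^{-\rho}\int_z^\infty s^{\rho-1} p_0(s)\,\dd s = \mathrm{B}_\rho p_0(z)$.

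For the converse (from $U$ to $u$), I would adapt the Helly-selection argument of Theorem \ref{thm:mondens}. Since $U' = -u$, for $0 < a < b$
\[
\frac{U(ax) - U(bx)}{x^\rho \ell(x)} = \int_a^b \frac{u(sx)}{x^{\rho-1}\ell(x)}\, \dd s
\]
converges along $x = r^n z$ (whenever $az, bz \in C_p$) to $a^\rho p(az) - b^\rho p(bz)$; in particular the integrand is uniformly bounded, and since $u$ is ultimately monotone, Helly's theorem extracts subsequential monotone limits $v_z(s)$. The integral relation pins them down: $s \mapsto s^\rho p(sz)$ has an absolutely continuous antiderivative, hence is differentiable, so $p \in \mathcal{P}_{r,\rho}^{1}$ and $v_z(s) = -\tfrac{\dd}{\dd s}[s^\rho p(sz)] = s^{\rho-1} p_0(sz)$ by \eqref{eq:Binv2}, which forces convergence along the whole sequence. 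The $\rho = 0$ case is handled exactly as in Lemma \ref{lemma:Karamatar=0}: the extra hypothesis $\int_0^\infty u < \infty$ together with $r^n z\, u(r^n z)/\ell(r^n z) \to p_0(z) > 0$ and monotonicity of $u$ give $\liminf x u(x)/\ell(x) > 0$; then $\varepsilon(x) := x u(x)/U(x) \to 0$ and the representation theorem for $\mathcal{SV}_\infty$ finishes the proof, while $U/\ell \to \infty$ follows from the same lower estimate on $\int \ell(y)/y\,\dd y$ as there.

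The main obstacle I anticipate is the dominated-convergence step in the first direction: one must verify that the uniform bound of Proposition \ref{prop:rlp0} is genuinely available (it is, since that proposition requires only the defining convergence of a regularly log-periodic function combined with ultimate monotonicity, both present here) and that it couples cleanly with the Potter-type control of $\ell$ at infinity, the sign $\rho < 0$ being essential precisely to secure integrability of the majorant at $\infty$. The only additional subtlety is that the set of $t$ with $tz \notin C_{p_0}$ is Lebesgue null, which is immediate from the right-continuity and boundedness of $p_0$.
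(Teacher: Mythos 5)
Your proposal is correct and follows essentially the same route the paper implicitly prescribes: the paper omits the proof of Proposition \ref{prop:rho<0} with the remark that ``the proofs are the same'' as Theorem \ref{thm:Karamata}, Theorem \ref{thm:mondens}, and Lemma \ref{lemma:Karamatar=0}, and you carry out precisely that adaptation to the tail integral $\int_x^\infty$, including the correct observation that $\rho<0$ is what secures an integrable majorant $CKt^{\rho-1+\delta}$ on $[1,\infty)$ after Potter's bound, and the sign-flipped inverse operator \eqref{eq:Binv2} in the Helly step.
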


For continuous $p$ see \cite[Lemma 3]{BP2}.

At 0 the corresponding result is the following.

\begin{proposition} \label{prop:Karamata0}
Let $U(x) = \int_0^x u(y) \dd y$, where $u$ is ultimately monotone, $r > 1, \rho > 0$, 
and 
$\ell \in \mathcal{SV}_0$. Then
\[
\lim_{n \to \infty} \frac{u(r^{-n} z)}{(r^{-n} z)^{\rho -1} \ell(r^{-n} z)} =
p_0(z) \quad \text{for each } z \in C_{p_0},
\text{ for some } p_0 \in \mathcal{P}_{r},
\]
if and only if
\[
\lim_{n \to \infty} \frac{U(r^{-n} z)}{(r^{-n} z)^{\rho} \ell(r^{-n} z)} =
p(z) \quad \text{for each } z \in C_{p},
\text{ for some } p \in \mathcal{P}_{r}.
\]
Moreover, $p = \mathrm{B}_\rho p_0$, in particular $p$ is continuous, thus
\[
U(x)  \sim x^{\rho} \ell(x) p(x) \quad \text{as } x \downarrow 0.
\]
\end{proposition}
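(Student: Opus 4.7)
The plan is to mirror the arguments of Theorems \ref{thm:Karamata} (forward direction) and \ref{thm:mondens} (converse direction) with $r^{n}$ replaced by $r^{-n}$ and all asymptotics taken at $0$ rather than at infinity, which is exactly the routine transfer the paper flags. Before starting, I would verify two auxiliary statements at $0$: first, a $0$-version of Proposition \ref{prop:rlp0}, giving
\[
\limsup_{x \downarrow 0} \frac{u(x)}{x^{\rho -1} \ell(x)} < \infty
\]
from the pointwise limit along $r^{-n} z$ and ultimate monotonicity of $u$; second, a $0$-version of Lemma \ref{lemma:cont-q}, upgrading a pointwise limit along $r^{-n} z$ with continuous log-periodic limit to a genuine asymptotic as $x \downarrow 0$. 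Both should follow from the infinity versions by the substitution $x \leftrightarrow 1/x$, using that $\ell \in \mathcal{SV}_{0}$ iff $x \mapsto \ell(1/x) \in \mathcal{SV}_{\infty}$.

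For the forward direction, after establishing the $u$-bound above I write, for $0 < \varepsilon < 1$,
\[
\frac{U(r^{-n} z) - U(r^{-n} z \varepsilon)}{(r^{-n} z)^{\rho} \ell(r^{-n} z)} = \int_{\varepsilon}^{1} \frac{u(r^{-n} z t)}{(r^{-n} z t)^{\rho -1} \ell(r^{-n} z t)}\, t^{\rho -1}\, \frac{\ell(r^{-n} z t)}{\ell(r^{-n} z)} \dd t,
\]
and pass to the limit by dominated convergence, obtaining $\int_{\varepsilon}^{1} t^{\rho -1} p_{0}(tz) \dd t$. Integrating the $u$-bound also yields $\limsup_{x \downarrow 0} U(x)/(x^{\rho} \ell(x)) < \infty$, so the omitted piece satisfies $U(r^{-n} z \varepsilon)/((r^{-n} z)^{\rho} \ell(r^{-n} z)) = O(\varepsilon^{\rho})$ uniformly in $n$; letting $\varepsilon \downarrow 0$ gives $\lim_{n} U(r^{-n} z)/((r^{-n} z)^{\rho} \ell(r^{-n} z)) = \mathrm{B}_{\rho} p_{0}(z)$. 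Since $\mathrm{B}_{\rho} p_{0}$ is continuous, the $0$-version of Lemma \ref{lemma:cont-q} upgrades this to the full asymptotic $U(x) \sim x^{\rho} \ell(x) p(x)$ as $x \downarrow 0$.

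For the converse, the identity
\[
\frac{U(b r^{-n} z) - U(a r^{-n} z)}{(r^{-n} z)^{\rho} \ell(r^{-n} z)} = \int_{a}^{b} \frac{u(s r^{-n} z)}{(r^{-n} z)^{\rho -1} \ell(r^{-n} z)}\, \dd s
\]
(valid for $az, bz \in C_{p}$) shows that the rescaled family $s \mapsto u(s r^{-n} z)/((r^{-n} z)^{\rho -1} \ell(r^{-n} z))$ is bounded on compact subsets of $(0, \infty)$ once ultimate monotonicity of $u$ is invoked. Helly's selection theorem then yields a subsequential monotone limit $v_{z}$, and the integral identity forces $v_{z}(s) = \frac{\dd}{\dd s}[s^{\rho} p(sz)]$ at continuity points, pinning it down uniquely and hence extending the convergence to the full sequence. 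By (\ref{eq:Binv}) this limit equals $s^{\rho -1} p_{0}(sz)$ with $p_{0} = \mathrm{B}_{\rho}^{-1} p \in \mathcal{P}_{r}$, so evaluating at $s = 1$ delivers the pointwise $u$-asymptotic for $z \in C_{p_{0}}$. I expect the only nontrivial bookkeeping to be the precise statement and proof of the $0$-version of Lemma \ref{lemma:cont-q}; once that is in hand, the rest is a mechanical transcription of the infinity proofs, with signs and inequality directions double-checked.
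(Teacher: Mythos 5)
Your proposal is correct and follows exactly the route the paper intends: the author explicitly omits the proof of Proposition \ref{prop:Karamata0} with the remark that ``the proofs are the same'' as Theorems \ref{thm:Karamata} and \ref{thm:mondens}, and your proposal is precisely that mechanical transfer from $\infty$ to $0$ (via $x\leftrightarrow 1/x$, $\ell\in\mathcal{SV}_0$, $r^n\to r^{-n}$), together with the correct observation that the analogue of condition (\ref{eq:limsup-u}) need not be assumed because it follows from the assumed ultimate monotonicity of $u$ via a $0$-version of Proposition \ref{prop:rlp0}.
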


\section{Applications} \label{sect:appl}

\subsection{Tails of nonnegative random variables} \label{subsect:tails}

In this subsection we prove the log-periodic analogue of Theorem A by Bingham 
and Doney \cite{BD}  (Theorem 8.1.8 in \cite{BGT}).

Let $X$ be a nonnegative random variable with distribution function $F$. If 
$\E X^m < \infty$, then its Laplace transform 
\begin{equation} \label{eq:hatF}
\widehat F(s) = \int_0^\infty e^{-sx} \dd F(x) 
\end{equation}
can be written as
\[
\widehat F(s) = \sum_{k=0}^m \mu_k \frac{(-s)^k}{k!} + o(s^m) \quad \text{as } s 
\downarrow 0,
\]
where $\mu_k = \E X^k$. Define for $m \geq 0$
\begin{equation} \label{eq:def-fg}
\begin{split}
f_m(s) & =  (-1)^{m+1} \left[ \widehat F(s) - \sum_{k=0}^m \mu_k \frac{(-s)^k}{k!} 
\right], \\
g_m(s) & = \frac{\dd^m}{\dd s^m} f_m(s) = \mu_m - (-1)^{m} \widehat F^{(m)}(s).
\end{split}
\end{equation}

\begin{theorem} \label{thm:L-tail}
Let $\ell \in \mathcal{SV}_\infty$, $m \in \{0,1,\ldots\}$, $\alpha = m + \beta$, 
$\beta \in [0,1]$, $\tilde q_m, q_m, p \in \mathcal{P}_{r}$.
The following are equivalent:
\begin{align} 
& f_m(s) \sim s^\alpha \ell(1/s) \tilde q_m(s) \text{ as } s \downarrow 0; 
\label{eq:Ltail-1}\\
& g_m(s) \sim s^\beta \ell(1/s) q_m(s) \text{ as } s \downarrow 0; 
\label{eq:Ltail-2}\\
& 
\begin{cases}
\lim_{n \to \infty} \ell(r^n)^{-1} \int_{r^n z}^\infty y^m \dd F(y) 
= p(z) \text{ for each } z \in C_p, & \beta = 0, \\
\lim_{n \to \infty} \frac{(r^n z)^\alpha}{\ell(r^n z)} \overline F(r^n z) = p(z)
\text{ for each  } z \in C_p, & \beta \in (0,1), \\
\lim_{n \to \infty} \ell(r^n)^{-1} \int_0^{r^n z} y^{m+1} \dd F(y) = p(z) 
\text{ for each  } z \in C_p, & \beta = 1.
\end{cases}
\label{eq:Ltail-3}
\end{align}
If $\beta > 0$, then (\ref{eq:Ltail-1})--(\ref{eq:Ltail-3}) are further equivalent to
\begin{equation} \label{eq:Ltail-4}
(-1)^{m+1} \widehat F^{(m+1)}(s) \sim s^{\beta -1} \ell(1/s) q_{m+1}(s)
\text{ as } s \downarrow 0, 
\end{equation}
and $q_{m+1} = \mathrm{B}_{\beta}^{-1} q_m$.

Moreover, the relations between the appearing functions are the following:
\[
\begin{split}
& q_m = \mathrm{B}_{\alpha- (m-1)}^{-1} \mathrm{B}_{\alpha - (m-2)}^{-1} \ldots 
\mathrm{B}_\alpha^{-1} \widetilde q_m, \ \beta \in [0,1], \ 
q_0 = \widetilde q_0, \\
& p_{0,m} = \mathrm{B}^{-1}_{1-\beta} \mathrm{A}_{1-\beta}^{-1} q_m, \ \beta \in (0,1), \\
& p = p_{0,m} - m \mathrm{B}_{-m-\beta} p_{0,m}, \ 
p_{0,m} = p + m \mathrm{B}_{-\beta} p, \ \beta \in (0,1).
\end{split}
\]
If $\beta \in \{0, 1\}$, then necessarily $p(x) \equiv p > 0$, $q_m(s) \equiv q_m > 0$, 
and $p = q_m$.
\end{theorem}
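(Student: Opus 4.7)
My plan is to establish the chain of equivalences by repeatedly applying the Tauberian theorem (Theorem \ref{thm:taub}) and the Karamata/monotone density results (Theorem \ref{thm:Karamata}, Theorem \ref{thm:mondens}, Proposition \ref{prop:Karamata0}, Proposition \ref{prop:rho<0}) to carefully chosen monotone auxiliary functions built from $F$ and its Laplace--Stieltjes transform. The key structural fact is that $(-1)^j \widehat F^{(j)}(s) = \int_0^\infty x^j e^{-sx}\, \dd F(x) \geq 0$ for every $j \geq 0$, which implies that each $f_m^{(j)}$, $0 \leq j \leq m+1$, is nonnegative and monotone on $(0,\infty)$, with $f_m^{(j)}(0)=0$ for $0\leq j\leq m$. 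Since we work at $s \downarrow 0$ and $\ell(1/s) \in \mathcal{SV}_0$, this monotonicity lets me freely apply both halves of Proposition \ref{prop:Karamata0} (Karamata direct and monotone density at zero).

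For the equivalence (\ref{eq:Ltail-1}) $\Leftrightarrow$ (\ref{eq:Ltail-2}), I iterate Proposition \ref{prop:Karamata0} along the identity $f_m^{(j)}(s) = \int_0^s f_m^{(j+1)}(t)\, \dd t$: one step transforms a log-periodic $f_m^{(j)}$-asymptotic into an $f_m^{(j+1)}$-asymptotic by dropping the exponent by one and applying $\mathrm{B}_{\alpha - j}^{-1}$ to the shape function; after $m$ iterations I obtain (\ref{eq:Ltail-2}) with $q_m = \mathrm{B}_{\alpha-(m-1)}^{-1}\cdots\mathrm{B}_\alpha^{-1}\widetilde q_m$, exactly as stated. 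The reverse implication uses the direct half of Proposition \ref{prop:Karamata0} in the same fashion. For $\beta > 0$, one additional application to $g_m(s) = \int_0^s g_m'(t)\, \dd t$ yields (\ref{eq:Ltail-2}) $\Leftrightarrow$ (\ref{eq:Ltail-4}) with $q_{m+1} = \mathrm{B}_\beta^{-1} q_m$.

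For (\ref{eq:Ltail-2}) $\Leftrightarrow$ (\ref{eq:Ltail-3}) in the main case $\beta \in (0,1)$, I use the representation $g_m(s) = s\,\widehat h(s)$ with $h(x) = \int_x^\infty y^m\, \dd F(y)$, so that $\widehat h(s) = g_m(s)/s$ is the Laplace--Stieltjes transform of the nondecreasing $H(x) = \int_0^x h(y)\, \dd y$. Theorem \ref{thm:taub} at index $\rho = 1-\beta$ converts (\ref{eq:Ltail-2}) into a log-periodic asymptotic for $H$, and Theorem \ref{thm:mondens} applied to the monotone $h = H'$ then gives $p_{0,m} = \mathrm{B}_{1-\beta}^{-1}\mathrm{A}_{1-\beta}^{-1} q_m$. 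To pass between the asymptotics of $h$ and of $\overline F$, I invoke the paired identities
\[
h(x) = x^m \overline F(x) + m\int_x^\infty y^{m-1}\overline F(y)\, \dd y,\qquad
\overline F(x) = x^{-m} h(x) - m\int_x^\infty y^{-m-1} h(y)\, \dd y,
\]
where the second follows by differentiating $x^{-m}\int_x^\infty y^{m-1}\overline F(y)\, \dd y$ and integrating the result from $x$ to $\infty$ (using $\mu_m < \infty$ to kill the boundary term). Each integral on the right is then evaluated asymptotically by Proposition \ref{prop:rho<0} at index $-\beta$ and $-m-\beta$, respectively, producing exactly the reciprocal formulas $p_{0,m} = p + m\,\mathrm{B}_{-\beta} p$ and $p = p_{0,m} - m\,\mathrm{B}_{-m-\beta} p_{0,m}$.

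The boundary cases $\beta \in \{0,1\}$ are handled analogously but now $\mathcal{P}_{r,0}$ collapses to the constants: for $\beta=0$ I apply the $\rho=0$ form of Theorem \ref{thm:taub} (combined with Lemma \ref{lemma:Karamatar=0} and Proposition \ref{prop:rho<0}) to $H$, while for $\beta=1$ I apply it to $M_{m+1}(x) = \int_0^x y^{m+1}\, \dd F(y)$, whose Laplace--Stieltjes transform equals $g_m'(s)$. The main technical obstacle I foresee is twofold: first, verifying ultimate monotonicity at every peel-off step and checking that the intermediate shape functions lie in the spaces $\mathcal{P}_{r,\rho}^{1}$ required to make the operators $\mathrm{B}_\rho^{-1}$ meaningful; second, the bookkeeping for the full operator chains, in particular the delicate application of Proposition \ref{prop:rho<0} at the negative index $-m-\beta$, so that the resulting formulas align with those displayed in the theorem.
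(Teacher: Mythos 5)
Your proposal is correct and follows essentially the same route as the paper's proof: iterated Proposition~\ref{prop:Karamata0} on the derivatives $f_m^{(j)}$, the auxiliary function $H(x)=\int_0^x\int_t^\infty y^m\,\dd F(y)\,\dd t$ with $\widehat H(s)=g_m(s)/s$ fed into Theorem~\ref{thm:taub} and Theorem~\ref{thm:mondens}, and the passage between $\int_x^\infty y^m\,\dd F(y)$ and $\overline F$ via the same partial-integration/Fubini identities and Proposition~\ref{prop:rho<0} at indices $-\beta$ and $-m-\beta$. The only cosmetic difference is that you write the converse identity as $\overline F(x)=x^{-m}h(x)-m\int_x^\infty y^{-m-1}h(y)\,\dd y$ rather than in the paper's ratio form, but these are the same relation.
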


Since $p(x)$ is constant for $\beta \in \{0,1\}$, by Lemma \ref{lemma:cont-q}
(\ref{eq:Ltail-3}) is further equivalent to
$\int_{x}^\infty y^m \dd F(y) \sim p \ell(x)$, and 
$\int_0^{x} y^{m+1} \dd F(y) \sim p \ell(x)$ as $x \to \infty$, respectively.

\begin{proof}[Proof of Theorem \ref{thm:L-tail}]
We follow the proof of Theorem 8.1.8 in \cite{BGT}.

The equivalence of (\ref{eq:Ltail-1}) and (\ref{eq:Ltail-2}) follows from iterated 
application of Proposition \ref{prop:Karamata0}. 
(Note that the derivatives of $f_m$ are monotone.)
We obtain that $q_m = \mathrm{B}_{\alpha- (m-1)}^{-1} 
\mathrm{B}_{\alpha - (m-2)}^{-1} \ldots \mathrm{B}_\alpha^{-1} \widetilde q_m$.
Furthermore, for $\beta > 0$ both (\ref{eq:Ltail-1}) and (\ref{eq:Ltail-2}) are 
equivalent to (\ref{eq:Ltail-4}), and $q_{m+1} = \mathrm{B}_{\beta}^{-1} q_m$.

Put
\[
U_m(x) = \int_0^x \int_t^\infty y^m \dd F(y) \dd t, 
\]
and note that $\widehat U_m(s) = g_m(s) / s$. Therefore (\ref{eq:Ltail-2}) is equivalent 
to
\begin{equation} \label{eq:hatU1}
\widehat U_m(s) \sim s^{\beta -1} \ell(1/s) q_m(s).
\end{equation}
For $\beta \in [0,1]$, using Theorem \ref{thm:taub} with $\rho = 1 - \beta$ this is 
equivalent to
\begin{equation} \label{eq:U1}
\lim_{n \to \infty} \frac{U_m(r^n z)}{(r^n z)^{1-\beta} \ell(r^n z)} = p_m(z)
\quad z \in C_{p_m}, 
\end{equation}
where $p_m = \mathrm{A}_{1-\beta}^{-1} q_m$, for $\beta \neq 1$, and
$p_m = q_m$ for $\beta = 1$.

First assume $\beta \in (0,1)$.
By Theorems \ref{thm:Karamata} and \ref{thm:mondens} with $\rho = 1 - \beta$, the latter 
holds if and only if
\begin{equation} \label{eq:u1}
\lim_{n \to \infty} \frac{u_m(r^n z)}{(r^n z)^{-\beta} \ell(r^n z) } = p_{0,m}(z) 
\quad z \in C_{p_{0,m}},
\end{equation}
where $u_m(x) = \int_x^\infty y^m \dd F(y)$, and $\mathrm{B}_{1-\beta} p_{0,m} = p_m$. 
Note that for $m=0$ this is exactly (\ref{eq:Ltail-3}). Partial integration gives
\begin{equation} \label{eq:F1}
u_m(x) = x^m \overline F(x) + m \int_x^\infty y^{m-1} \overline F(y) \dd y. 
\end{equation}
If (\ref{eq:Ltail-3}) holds, then by Proposition \ref{prop:rho<0} with $\rho = -\beta$, 
we obtain 
(\ref{eq:u1}) with
$p_{0,m} = p + m \mathrm{B}_{-\beta} p$, so (\ref{eq:Ltail-2}) follows.

Conversely, using Fubini's theorem, we get
\begin{equation} \label{eq:F2}
 \frac{x^m \overline F(x)}{u_m(x)} = 1 - \frac{m x^m}{u_m(x)} \int_x^\infty y^{-m-1} 
u_m(y) \dd y.
\end{equation}
Now, Proposition \ref{prop:rho<0} with $\rho = - m - \beta$ shows that (\ref{eq:u1}) is 
further equivalent to
\begin{equation} \label{eq:u2}
\lim_{n \to \infty} \frac{\int_{r^n z}^\infty y^{-m-1} u_m(y) \dd y}{(r^n z)^{-m-\beta} 
\ell(r^n z)} = {\mathrm{B}}_{-m-\beta} p_{0,m} (z).
\end{equation}
Thus, if (\ref{eq:u1}) holds, then by (\ref{eq:F2})
\[
\lim_{n \to \infty}
\frac{(r^n z)^{m+\beta}}{\ell(r^n z)} \overline F(r^n z) =
p_{0,m}(z) - m \mathrm{B}_{-m-\beta} p_{0,m}(z) \quad z \in C_{p_{0,m}},
\]
which is exactly (\ref{eq:Ltail-3}).
\smallskip

For $\beta = 0$ conditions (\ref{eq:U1}) and (\ref{eq:u1}) are still equivalent. If 
(\ref{eq:u1}) holds, then the monotonicity of $u$ forces that $p_{0,m}$ is constant, thus 
(\ref{eq:Ltail-3}) follows with $p = p_{0,m}$. The converse is obvious.

For $\beta = 1$ note that $(-1)^{m+1} \widehat F^{(m+1)}(s)$ is the 
Laplace--Stieltjes transform of $\int_0^x y^{m+1} \dd F(y)$. Therefore, by Theorem 
\ref{thm:taub}, (\ref{eq:Ltail-3}) and (\ref{eq:Ltail-4}) are equivalent, and 
$q_{m+1} = p$.
\end{proof}

We spell out this result in the most important special case, when $m = 0$.
In this case $f_0(s) = g_0(s) = 1 - \widehat F(s)$.

\begin{corollary} \label{cor:alpha}
Let $\ell \in \mathcal{SV}_\infty$, 
$\alpha \in [0,1]$, $q_0, p \in \mathcal{P}_{r}$.
The following are equivalent:
\begin{align} 
& 1 - \widehat F(s) \sim s^\alpha \ell(1/s) q_0(s) \text{ as } s \downarrow 0; 
\label{eq:Ltail-1spec}\\
& 
\begin{cases}
\lim_{n \to \infty} \frac{(r^n z)^\alpha}{\ell(r^n z)} \overline F(r^n z) = p(z)
\text{ for each  } z \in C_p, & \alpha \in [0,1), \\
\lim_{n \to \infty} \ell(r^n)^{-1} \int_0^{r^n z} y \dd F(y) = p(z) 
\text{ for each  } z \in C_p, & \alpha = 1.
\end{cases}
\label{eq:Ltail-3spec}
\end{align}
If $\alpha > 0$, then (\ref{eq:Ltail-1spec}), (\ref{eq:Ltail-3spec}) are further 
equivalent to
\begin{equation} \label{eq:Ltail-4spec}
- \widehat F'(s) \sim s^{\alpha -1} \ell(1/s) q_{1}(s)
\text{ as } s \downarrow 0, 
\end{equation}
and $q_1 = \mathrm{B}_{\alpha}^{-1} q_0$.

Moreover, $p = \mathrm{B}^{-1}_{1-\alpha} \mathrm{A}_{1-\alpha}^{-1} q_0$, if
$\alpha \in (0,1)$. If $\alpha \in \{0, 1\}$, then necessarily $p(x) \equiv p > 0$, 
$q_0(s) \equiv q_0 > 0$, and $p = q_0$. 
\end{corollary}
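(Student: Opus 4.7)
The plan is to deduce Corollary \ref{cor:alpha} directly from Theorem \ref{thm:L-tail} by specializing $m = 0$ and tracking what each expression becomes. First I would observe the central simplification: when $m = 0$, the formulas in (\ref{eq:def-fg}) give
\[
f_0(s) = g_0(s) = 1 - \widehat F(s),
\]
so conditions (\ref{eq:Ltail-1}) and (\ref{eq:Ltail-2}) of the theorem coincide and collapse to (\ref{eq:Ltail-1spec}), with $\widetilde q_0 = q_0$. The chain relation $q_m = \mathrm{B}_{\alpha-(m-1)}^{-1} \cdots \mathrm{B}_\alpha^{-1} \widetilde q_m$ reduces to the empty product (identity), confirming $q_0 = \widetilde q_0$ in the corollary.

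Next, I would read off (\ref{eq:Ltail-4spec}) directly from (\ref{eq:Ltail-4}) at $m = 0$: the left-hand side $(-1)^{0+1}\widehat F^{(1)}(s) = -\widehat F'(s)$ and the identification $q_1 = \mathrm{B}_\beta^{-1} q_0$ are exactly as claimed, with $\beta = \alpha$ since $\alpha = m + \beta = \beta$. For the tail conditions in (\ref{eq:Ltail-3spec}), I would go case by case through the three cases of (\ref{eq:Ltail-3}): for $\alpha = \beta = 0$, the theorem gives $\ell(r^n)^{-1}\overline F(r^n z) \to p(z)$, which because $\ell$ is slowly varying and $(r^n z)^0 = 1$ coincides with the first branch of (\ref{eq:Ltail-3spec}); for $\alpha = \beta \in (0,1)$ the middle branch is literally the same statement; for $\alpha = \beta = 1$ the third branch, written in terms of $\int_0^{r^n z} y\, \mathrm{d}F(y)$, matches word for word.

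It remains to specialize the relations between $p$, $p_{0,m}$, $q_m$. For $m = 0$ the theorem asserts
\[
p_{0,0} = \mathrm{B}^{-1}_{1-\beta} \mathrm{A}^{-1}_{1-\beta} q_0, \qquad p = p_{0,0} - 0\cdot \mathrm{B}_{-\beta} p_{0,0} = p_{0,0},
\]
so $p = \mathrm{B}^{-1}_{1-\alpha} \mathrm{A}^{-1}_{1-\alpha} q_0$ for $\alpha \in (0,1)$, exactly as stated. For the endpoint cases $\alpha \in \{0,1\}$ the assertion that $p$ and $q_0$ must be constant is already contained in the last sentence of Theorem \ref{thm:L-tail}; the identity $p = q_0$ then follows from Theorem \ref{thm:taub} applied to the measure $\overline F$ or to $\int_0^x y\, \mathrm{d}F(y)$ (at $\rho = 0$), as noted in the remark following Theorem \ref{thm:L-tail}.

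There is no substantial obstacle: the whole content of the corollary is bookkeeping. The only points that require a moment of care are checking that the empty operator product is the identity when $m = 0$ (so $q_0 = \widetilde q_0$), and that the correction term $m \,\mathrm{B}_{-m-\beta} p_{0,m}$ vanishes at $m = 0$ (so $p = p_{0,0}$); both of these are what cause the formulas to collapse so cleanly.
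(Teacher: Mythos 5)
Your proof is correct and takes exactly the paper's intended route: the paper introduces the corollary with the sentence ``We spell out this result in the most important special case, when $m=0$'' and gives no separate proof, so the content is precisely the bookkeeping you carry out. Your checks of the collapse $f_0=g_0=1-\widehat F$, the empty operator product giving $q_0=\widetilde q_0$, the vanishing of the correction term $m\,\mathrm{B}_{-m-\beta}p_{0,m}$ at $m=0$, and the merging of the $\beta=0$ and $\beta\in(0,1)$ branches (via slow variation of $\ell$) are all accurate.
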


\begin{example} \label{ex:StP}
\emph{St.~Petersburg distribution.}
The random variable $X$ has generalized St.~Petersburg distribution with 
parameter $\alpha \in (0,1]$ (and $p=q=1/2$) if 
$\p \{ X = 2^{n/\alpha} \} = 2^{-n}$, $n=1,2,\ldots$. The tail of the 
distribution function
\[
\overline F(x) = \p \{ X > x \} =
\frac{2^{\{ \alpha \log_2 x \}}}{x^\alpha}, \quad x \geq 2^{1/\alpha},
\]
where $\{ x \}$ stands for the fractional part of $x$.
On generalized St.~Petersburg distributions we refer to Cs\"org\H{o} 
\cite{Csorgo}, Berkes, Gy\"orfi, and Kevei \cite{BGyK}, and the references 
therein.

With the notation of Corollary \ref{cor:alpha}, for $\alpha < 1$
we have $r=2^{1/\alpha}$, $p(z) \equiv 2^{\{ \alpha \log_2 z\}}$, and $\ell(x) 
\equiv 1$, while if $\alpha = 1$ then $r=2$, $p(z) \equiv 1$, and $\ell(x) = 
\log_2 x$. In this special case for the Laplace 
transform
\[
\widehat F(s) = \sum_{n=1}^\infty e^{-2^{n/\alpha} s} 2^{-n}
\]
explicit computation shows that
\[
\begin{split}
1 - \widehat F(s) & \sim s^\alpha 
\sum_{m=- \infty}^\infty \left( 1- \exp \left[ 2^{(m - \{ \alpha \log_2 s^{-1} 
\} )/\alpha} \right] \right) 2^{-m + \{ \alpha \log_2 s^{-1} \} } \\
& =: s^\alpha q_0(s) 
\end{split} 
\]
as  $s \downarrow 0$, whenever $\alpha < 1$, and 
\[
1 - \widehat F(s) \sim s \log_2 s^{-1} \quad \text{as } s \downarrow 0,
\]
for $\alpha = 1$. This is exactly  the statement of Corollary \ref{cor:alpha}. 
A somewhat lengthy but straightforward calculation shows that
$q_0 = A_{1-\alpha} B_{1- \alpha} p$ for $\alpha < 1$.
\end{example}

\subsection{Fixed points of smoothing transforms} \label{subsect:smooth}

Let $T = (T_i)_{i \in \N}$ be a sequence of nonnegative random variables; it can be 
finite, or infinite, dependent, or independent. A random variable $X$, or its 
distribution, is the fixed point of the (homogeneous) smoothing transform corresponding 
to $T$, if
\begin{equation} \label{eq:smoothing-eq}
X \stackrel{\mathcal{D}}{=} \sum_{i \geq 1} X_i T_i, 
\end{equation}
where on the right-hand side $X_1, X_2, \ldots$ are iid copies of $X$, and they are 
independent of $T$. 

The theory of smoothing transforms goes back to Mandelbrot \cite{Mandelbrot}.
Existence and behavior of the solution of equations of type (\ref{eq:smoothing-eq}) was 
investigated by Durrett and Liggett \cite{DL}, Guivarc'h \cite{Gui}, Liu \cite{Liu},
Jelenkovi\'c and Olvera-Cravioto \cite{JOC3}, 
Alsmeyer, Biggins, and Meiners \cite{ABM},
to mention just a few. For applications and references we 
refer to Section 5.2 in the monograph \cite{BDM} by Buraczewski, Damek, and Mikosch.

Most of the results on the tail behavior of the solution provide 
conditions which imply exact power-law tail. We are aware of very few exceptions. Theorem 
2.2 in \cite{Liu} states that in the arithmetic case, under appropriate conditions there 
is an $\alpha > 0$, such that
\[
0 < \liminf_{x \to \infty} x^\alpha \p \{ X > x \} \leq 
\limsup_{x \to \infty} x^\alpha \p \{ X > x \} < \infty.
\]
Guivarc'h \cite[p.268]{Gui} noted without proof that in the arithmetic case under 
appropriate conditions the tail of $X$, the solution of (\ref{eq:smoothing-eq}) behaves as 
$q(x) x^{-\alpha}$, for some $p \in \mathcal{P}_{r, \alpha}$. The implicit renewal theory 
for the smoothing transform was worked out by Jelenkovi\'c and Olvera-Cravioto \cite{JOC3} 
both in the arithmetic case and nonarithmetic case.

In order to state the main result in \cite{ABM} we need some further definition 
and assumptions. Let $N = \sum_{i} I(T_i > 0)$ denote the number of positive 
terms in the 
right-hand side in (\ref{eq:smoothing-eq}) and put $m(\theta) = \E \sum_{i=1}^N 
T_i^\theta $. Assume that 
\begin{itemize}
\item[(i)] $\p \{ T \in \{ 0, 1\}^\N \} < 1$;
\item[(ii)] $\E N > 1$; 
\item[(iii)] there exists an $\alpha \in (0,1]$, such that $1 = m(\alpha) < m(\beta)$, 
for each $\beta \in [0,\alpha)$;
\item[(iv)] either
$\E \sum_{i \geq 1} T_i^\alpha \log T_i \in ( -\infty, 0)$ and 
$\E (\sum_{i \geq 1} T_i^\alpha ) \log_+ \sum_{i \geq 1} T_i^\alpha < \infty$,
or there is a $\theta  \in [0, \alpha)$, such that $m(\theta) < \infty$;
\item[(v)]
there exists a nonnegative random variable $W$, which is not identically 0, such 
that 
\[
W \stackrel{\mathcal{D}}{=} \sum_{i \geq 1} T_i^\alpha W_i, 
\]
where on the right-hand side $W_1, W_2, \ldots$ are iid copies of $W$, they are 
independent of $T$, and $T$ has the same distribution as in (\ref{eq:smoothing-eq});
\item[(vi)] the positive elements of $T$ are concentrated on $r^\Z$ for some $r > 1$, and 
$r$ is the smallest such number.
\end{itemize}
Under the above assumptions in \cite[Corollary 2.3]{ABM} it was showed that the Laplace 
transform $\varphi$ of the solution of the fixed point equation (\ref{eq:smoothing-eq}) 
has the form
\begin{equation} \label{eq:sol-smtrf}
\varphi(t) = \psi( h(t) t^\alpha), \quad t \geq 0,
\end{equation}
where $\alpha \in (0,1]$, $h$ is a logarithmically $r$-periodic function such that $h(t) 
t^\alpha$ is a Bernstein-function, i.e.~its derivative is completely monotone, and
$\psi$ is a Laplace transform of the random variable $W$ in (v), such that
$(1- \psi(t)) t^{-1}$ is 
slowly varying at 0. 

The tail behavior of the solutions was not discussed. Theorem 
\ref{thm:L-tail}, in particular Corollary \ref{cor:alpha}, allows us the determine 
the tail behavior of such solutions.
Indeed, if $\ell \in \mathcal{SV}_\infty$, then $\tilde \ell (x) = \ell (x^\alpha h(x)) 
\in \mathcal{SV}_\infty$. Therefore, from (\ref{eq:sol-smtrf}) 
\[
1 - \varphi(t) = t^\alpha \tilde \ell (1/t) h(t), 
\]
which allows us to apply Corollary \ref{cor:alpha}. Noting that 
$\ell (x^\alpha h(x)) \sim \ell(x^\alpha)$ as $x \to \infty$, we obtain the following.

\begin{corollary} \label{cor:smooth}
Assume (i)--(vi). If $\alpha < 1$, then for the tail $\overline F(x) = \p \{ X > x \}$ of 
the solution of equation 
(\ref{eq:smoothing-eq}) we have
\[
\lim_{n \to \infty} \frac{(r^n z)^\alpha}{\ell(r^{\alpha n})} \p \{ X > r^n z\} = p(z), 
\quad z \in C_p,
\]
where $p = \mathrm{B}^{-1}_{1-\alpha} \mathrm{A}^{-1}_{1-\alpha} h$. While, if 
$\alpha=1$, then $h(t) \equiv h$ is necessarily a constant, and  
\[
\int_0^{x} y \dd F(y) \sim  h\, \ell(x).
\]
\end{corollary}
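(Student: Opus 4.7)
The plan is to reduce the corollary to Corollary~\ref{cor:alpha} via the explicit representation (\ref{eq:sol-smtrf}). The hypothesis that $(1-\psi(t))t^{-1}$ is slowly varying at $0$ lets us write $1-\psi(s) = s\,\ell(1/s)$ for some $\ell \in \mathcal{SV}_\infty$. Substituting $s = h(t)t^\alpha$ into (\ref{eq:sol-smtrf}) gives
\[
1 - \varphi(t) = h(t)\, t^\alpha\, \ell\!\left( \tfrac{1}{h(t)\, t^\alpha} \right),
\quad t > 0,
\]
which already has the shape required by Corollary~\ref{cor:alpha}, except that the argument of $\ell$ is oscillatory.

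Next I would simplify the argument of $\ell$. Since $h \in \mathcal{P}_r$ is bounded and bounded away from $0$ (both following from $\inf_{[1,r]} h > 0$, boundedness, and log-periodicity), the values $1/h(t)$ lie in a fixed compact subset of $(0,\infty)$, so the uniform convergence theorem for slowly varying functions yields $\ell(1/(h(t)\,t^\alpha)) \sim \ell(t^{-\alpha})$ as $t \downarrow 0$. Setting $\widetilde\ell(x) := \ell(x^\alpha)$, which is again slowly varying at infinity, I obtain
\[
1 - \varphi(t) \sim t^\alpha\, \widetilde\ell(1/t)\, h(t), \quad t \downarrow 0,
\]
which is exactly (\ref{eq:Ltail-1spec}) with $q_0 = h$ and slowly varying function $\widetilde\ell$.

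For $\alpha \in (0,1)$ Corollary~\ref{cor:alpha} then gives
\[
\lim_{n \to \infty} \frac{(r^n z)^\alpha}{\widetilde\ell(r^n z)}\, \overline F(r^n z) = p(z), \quad z \in C_p,
\]
with $p = \mathrm{B}^{-1}_{1-\alpha}\mathrm{A}^{-1}_{1-\alpha} h$. Since $\widetilde\ell(r^n z) = \ell(r^{\alpha n} z^\alpha) \sim \ell(r^{\alpha n})$ for each fixed $z > 0$ by slow variation of $\ell$, the denominator can be replaced by $\ell(r^{\alpha n})$, yielding the claimed asymptotics. For $\alpha = 1$ the last clause of Corollary~\ref{cor:alpha} forces $q_0 = h$ to be a positive constant, and the same corollary then produces $\int_0^x y\,\mathrm{d}F(y) \sim h\,\widetilde\ell(x) = h\,\ell(x)$.

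The main obstacle, such as it is, lies in the second step: justifying that the oscillatory argument $1/(h(t)\,t^\alpha)$ of $\ell$ can be replaced by $t^{-\alpha}$ without disturbing the asymptotic equivalence. This is a standard consequence of the uniform convergence theorem for slowly varying functions combined with the two-sided boundedness of $h$; once it is in place, everything else is a direct translation of Corollary~\ref{cor:alpha} to the present setting.
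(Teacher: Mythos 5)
Your proof is correct and follows essentially the same route as the paper: substitute the representation $\varphi(t) = \psi(h(t)t^\alpha)$ into $1-\psi(s) = s\,\ell(1/s)$, absorb the bounded oscillation $h$ into the slowly varying factor (the paper writes $\tilde\ell(x) = \ell(x^\alpha h(x))$ and notes $\tilde\ell(x) \sim \ell(x^\alpha)$, which is the same step you justify via the uniform convergence theorem), and then apply Corollary~\ref{cor:alpha} with $q_0 = h$. The only point worth making explicit, which you sketch correctly, is why $h$ is bounded and bounded away from zero: since $h(t)t^\alpha$ is a Bernstein function and $\psi$ is a nondegenerate Laplace transform, $h$ is continuous and strictly positive on $(0,\infty)$, hence by log-periodicity bounded above and below on all of $(0,\infty)$, so $h \in \mathcal{P}_r$ and the replacement $\ell\bigl(1/(h(t)t^\alpha)\bigr) \sim \ell(t^{-\alpha})$ is legitimate.
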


\subsection{Semistable laws} \label{subsect:semi}

Logarithmically periodic functions, and regularly log-periodic functions naturally arise 
in the analysis of semistable distributions. The class of semistable laws, introduced by 
Paul L\'evy, is an important subclass of infinitely divisible laws. The semistable laws 
are the stable laws, and those infinitely divisible distributions, which has no normal 
component, and the L\'evy measure $\mu$ in the L\'evy--Khinchin representation satisfies
\[
\mu((x, \infty)) = x^{-\alpha} p_+(x), \ 
\mu((-\infty, x)) = x^{-\alpha} p_-(x), \ x > 0, 
\]
where $\alpha \in (0,2)$, $r > 1$, and $p_+, p_- \in \mathcal{P}_{r,-\alpha} \cup \{ 0 
\}$ (0 is the identically 0 function), such that at least one of them is not identically 
0.
For properties, characterization, applications and some history of semistable laws we 
refer to Megyesi \cite{Megyesi}, Huillet, Porzio, and Ben Alaya \cite{Huillet}, 
and 
Meerschaert and Scheffler \cite{MeerschaertS}, and the references therein. We note that in 
the characterization of the domain of geometric 
partial attraction regularly log-periodic functions play an important role; see Grinevich 
and Khokhlov \cite{GrinevichKhokhlov}, and Megyesi \cite{Megyesi}.

Although there has been large interest in semistable laws in the last 50 years, the tail 
behavior was determined only in 2012 by Watanabe and Yamamuro \cite{WY3}. We reprove some 
of their results, emphasizing that more precise and more general results were 
shown in 
\cite{WY3}. In particular, we restrict ourselves to the nonnegative semistable laws, 
since the technique developed in this paper works only for one-sided laws. 

The Laplace transform of a nonnegative semistable random variable $W$ has the form
\begin{equation} \label{eq:Laplace-sstable}
\E e^{-s W} = \exp \left\{ - a s - \int_0^\infty (1 - e^{-sy}) \nu(\dd y) \right\},
\end{equation}
where $a \geq 0$, and $\nu$ is a L\'evy measure such that
$\overline \nu(x) = p(x) x^{-\alpha}$, with $p \in \mathcal{P}_{r,-\alpha}$, $\alpha 
\in (0,1)$, and $\overline \nu(x) = \nu ((x, \infty))$, $x > 0$. Partial integration gives
\[
\int_0^\infty ( 1 - e^{-sy} ) \nu (\dd y) = \int_0^\infty e^{-sy} s \overline \nu(y) \dd y
= s \widehat U(s), 
\]
where 
\[
U(x) = \int_0^x \overline \nu(y) \dd y = x^{1-\alpha} \mathrm{B}_{1 - \alpha} p(x).
\]
From Theorem \ref{thm:taub} we have
\[
\widehat U(s) \sim s^{\alpha - 1} q(s) \quad \text{as } s \downarrow 0,
\]
with $q= \mathrm{A}_{1-\alpha} \mathrm{B}_{1-\alpha} p$. Thus, 
(\ref{eq:Laplace-sstable}) gives
\[
1 - \E e^{-s W} \sim a s + \int_0^\infty (1 - e^{-sy}) \nu(\dd y) \sim 
s^{\alpha} q(s)
\quad \textrm{as } s \downarrow 0.
\]
Corollary \ref{cor:alpha} implies
\[
\lim_{n \to \infty} (r^n z)^\alpha \, \p \{ W > r^n z \} = p(z)
\quad \text{for each } z \in C_p, 
\]
or, which is the same
\[
\lim_{n \to \infty} r^{n \alpha} \, \p \{ W > r^n z \} = \overline \nu(z)
\quad \text{for each } z \in C_{p}.  
\]
This is the statement in Theorem 1 \cite{WY3}. However, there the limit above is 
determined for any $z > 0$.

\subsection{Supercritical Galton--Watson process} \label{subsect:GW}

Consider a supercritical Galton--Watson process $(Z_n)_{n \in \N}$, $Z_0 =1$, with 
offspring generating function $f(s) = \E s^{Z_1}$, and offspring mean $\mu = \E Z_1 \in 
(1, \infty)$. Let $q \in [0,1)$ denote the extinction probability, i.e.~the smaller root 
of $f(s) = s$ in $[0,1]$. Denote $f_n$ the $n$-fold iterate of $f$, which is 
the generating function of $Z_n$. 
On general theory of branching processes see Athreya and Ney \cite{AthreyaNey}. 

Further assume $\E Z_1 \log Z_1 < \infty$, which assures that
\[
\frac{Z_n}{\mu^n} \longrightarrow W \quad \text{as } n \to \infty  \, \text{ a.s.}, 
\]
with $\E W = 1$. The Laplace transform of $W$, $\varphi(t) = \E e^{- t W}$, $t \geq 0$, 
satisfies the Poincar\'e functional equation
\begin{equation} \label{eq:poincare}
 \varphi( \mu t) = f(\varphi(t)).
\end{equation}
The latter equation always has a unique (up to scaling) solution, which is a Laplace 
transform of a distribution. However, the law of $W$ can be determined explicitly only in 
very few special cases. Therefore, it is important to obtain asymptotic behavior of the 
tail probabilities.  Assume that we are in the Schr\"oder case, that is $\gamma= f'(q) 
> 0$. Then the Schr\"oder function
\[
Q(s) = \lim_{n \to \infty} \frac{f_n(s) - q}{ \gamma^n} 
\]
is continuous and it is the unique solution of the functional equation
\begin{equation} \label{eq:schroeder}
Q(f(s)) = \gamma Q(s) \quad s \in [0,1),
\end{equation}
which satisfies $Q(q) = 0$ and $\lim_{s \to q} Q'(s) = 1$. Using the functional equations 
(\ref{eq:poincare}) and (\ref{eq:schroeder}) one has that the \emph{Karlin--McGregor 
function} of $f$
\[
K(s) = s^\alpha Q(f(s)),
\]
with $\alpha = - \log \gamma / \log \mu$, is logarithmically periodic with period $\mu$.
Harris \cite[Theorem 3.3]{Harris} proved that
\begin{equation} \label{eq:harris}
\varphi(s) \sim \frac{K(s)}{s^\alpha} \quad \text{as } \, s \to \infty. 
\end{equation}
From a version of Theorem \ref{thm:taub}, with $n \to - \infty$ in 
(\ref{eq:U-asy}) and $s \to \infty$ in (\ref{eq:hatU-asy}), it follows for the 
distribution function
$G(x) = \p \{ W \leq x \}$ that
\begin{equation} \label{eq:F-Harris}
\lim_{n \to \infty} G(r^{-n} z) (r^{-n} z)^{-\alpha} =  p(z),
\end{equation}
with $p = \mathrm{A}^{-1}_\alpha K$.

A much stronger result was shown by Biggins and Bingham \cite[Theorem 4]{BigginsBingham}, 
namely
\[
G'(x) \sim x^{\alpha - 1} V(x) \quad \text{as } \, x \downarrow 0,
\]
where $V$ is a continuous, positive, logarithmically periodic function with period $\mu$.
For further results on tail asymptotics of $W$ we refer to Bingham \cite{Bingham}, 
Biggins and Bingham \cite{BigginsBingham}, and to the more recent papers by Fleischmann 
and Wachtel \cite{FW} and by Wachtel, Denisov, and Korshunov \cite{WDK}.

\bigskip

\noindent
\textbf{Acknowledgement.}
This research was supported by the J\'anos Bolyai Research Scholarship of the Hungarian 
Academy of Sciences, and by the NKFIH grant FK124141.

\def\cprime{$'$}

\end{document}